\def\ball{B^n_2}
\def\bbR{\mathbb R}
\newtheorem{thm}{Theorem}
\newtheorem{cor}[thm]{Corollary}
\newtheorem{prop}[thm]{Proposition}
\newtheorem{rem}{Remark}
\begin{document}

\title{Anisotropic Sobolev Capacity with Fractional Order}
 
\author{Jie Xiao} \address{Department of Mathematics and Statistics, Memorial University, St. John's, NL A1C 5S7, Canada}
\email{jxiao@mun.ca}\thanks{Research of JX is supported in part by: NSERC and URP of MUN, Canada.}

 \author{Deping Ye}
\address{Department of Mathematics and Statistics, Memorial University, St. John's, NL A1C 5S7, Canada}
 \email{deping.ye@mun.ca}
 \thanks{Research of DY is supported by NSERC, Canada}

\subjclass[2010]{52A38, 53A15, 53A30}
  
\keywords{Sharpness, isoperimetric inequality, fractional Sobolev capacity, fractional perimeter}

\begin{abstract} In this paper, we introduce the  anisotropic Sobolev capacity with fractional order and develop some basic properties for this new object. Applications to the theory of anisotropic fractional Sobolev spaces are provided. In particular, we give geometric  characterizations for a nonnegative Radon measure $\mu$ that naturally induces an embedding of the anisotropic fractional Sobolev class $\dot{\Lambda}_{\alpha,K}^{1,1}$ into the $\mu$-based-Lebesgue-space $L^{n/\beta}_\mu$ with $0<\beta\le n$. Also, we investigate the anisotropic fractional $\alpha$-perimeter. Such a geometric quantity can be used to approximate the  anisotropic Sobolev capacity with fractional order. Estimation on the constant in the related Minkowski inequality, which is asymptotically optimal as $\alpha\rightarrow 0^+$, will be provided.  
\end{abstract}

\maketitle

\section{Anisotropic fractional Sobolev capacity}\label{s1}

A subset $K\subset \bbR^n$ is said to be a convex body if $K$ is a convex compact subset of $\bbR^n$ with nonempty interior. Related to each convex body $K$ with the origin in its interior, one can uniquely define the support function $h_K(\cdot): S^{n-1}\rightarrow \bbR$ as $$h_K(u)=\max\{\langle y, u\rangle, \ \ \ y\in K\}, \ \ \ \forall u\in S^{n-1}, $$ where  $\langle\cdot, \cdot\rangle$ denotes the usual inner product on $\bbR^n$ and induces the usual Euclidean norm  $\|\cdot \|$.  The unit Euclidean ball of $\bbR^n$ is $\ball =\{x\in \bbR^n: \|x\|\leq 1\}$.   For a subset $L\subset \bbR^n$ with the origin in $L$, its polar $L^*$ is defined by $L^*=\{y\in \bbR^n: \ \langle x, y\rangle \leq 1, \ \forall x\in L\}.$ Note that $L^*$ is always convex no matter the convexity of $L$.  The volume of $K$ is denoted by $V(K)$, and more general, $V(M)$ denotes the appropriate dimensional Hausdorff content of $M$.  For a subset $E\subset \bbR^n$, $\overline{E}$ denotes the closure of $E$.   

The Minkowski functional of $K$ is denoted by $\|\cdot\|_K$ and is defined as
\begin{eqnarray*} \|x\|_K=\inf\{\lambda>0: x\in \lambda K\},
\end{eqnarray*} where  $\lambda K=\{\lambda y:  y\in K\}$ 
for $\lambda\in \bbR$. In particular, if $K=-K$, then $K$ is said to be origin-symmetric.  It is easy to check that, for any origin-symmetric convex body $K\subset \bbR^n$,  $\|\cdot\|_K$ defines a norm on $\bbR^n$.  The usual Euclidean norm $\|\cdot\|$ is related to $K=\ball$. 

Throughout this paper,   $\alpha\in (0, 1)$ is a constant and $K\subset\mathbb R^n$ is always assumed to be an origin-symmetric convex body. A function $f$ is said to be of $C_0^\infty$, denoted  by $f\in C_0^\infty$, if $f$ is smooth and has compact support in $\mathbb R^n$.  Consider the following norm for $f\in C^\infty_0$    \begin{equation*}
\|f\|_{\dot{\Lambda}_{\alpha, K} ^{1,1}}=\int_{\mathbb R^n} \int_{\mathbb R^n}\frac{|f(x)-f(y)|}{\|x-y\|_K^{n+\alpha}}\,dx\,dy.
\end{equation*}  The completion of all functions $f\in
C^\infty_0$ with the above norm is denoted by $\dot{\Lambda}^{1,1}_{\alpha, K}$.  Such a function space will be called the anisotropic fractional Sobolev  space with respect to $K$ (or the homogeneous $(\alpha,1,1, K)$-Besov space).   Theorems 1 and 2 in \cite{L1} imply that 
\begin{equation}\label{function-inequality-1} 
\lim_{\alpha\to 0^+}\alpha \|f\|_{\dot{\Lambda}^{1,1}_{\alpha, K}}=2nV(K)  \|f\|_{L^1}\quad\&\quad \lim_{\alpha\to 1^-}(1-\alpha)\|f\|_{\dot{\Lambda}^{1,1}_{\alpha, K}}=\int_{\bbR^n} \|\nabla f(x) \|_{Z_1^*K}\,dx,
\end{equation}  
where $Z_1^*K$ is the polar body of  $Z_1K$ (the moment body of $K$) and the support function of  $Z_1K$ is determined by   $$h_{Z_1K}(x)=\|x\|_{Z_1^*K}=\frac{n+1}{2}\int_K |\langle x, y\rangle|\,dy, \ \ \forall x\in \bbR^n.$$    The case $K$ being the unit Euclidean ball $\ball$ has been considered in, e.g., \cite{ BBM1, BBM2, L1, MS1, MS2}. 
 
For any given compact subset $L$ of $\bbR^n$, one can define  ${\mathrm{cap}}(L; \dot{\Lambda}_{\alpha, K}^{1,1})$, the anisotropic fractional Sobolev capacity of $L$ with respect to $K$,   by 
\begin{equation}
{\mathrm{cap}}(L;
\dot{\Lambda}_{\alpha, K}^{1,1})=\inf\big\{\|f\|_{\dot{\Lambda}_{\alpha, K}^{1,1}}:\ \ 
f\in C^\infty_0\ \ \&\ \ f\ge \mathbf{1}_L\big\}. \label{Related capacity-0}
\end{equation} 
Hereafter, $\mathbf{1}_E$ denotes the indicator function of $E\subset\mathbb R^n$. For any compact $L\subset\mathbb R^n$, formula (\ref{function-inequality-1}) implies,  (see also \cite{L2}),
\begin{equation}\label{Sobolev:limit--1}
\lim_{\alpha\to 0^+}\alpha \ \mathrm{cap}(L; \dot{\Lambda}_{\alpha, K} ^{1,1})
=2nV(L) V(K)\quad\&\quad \lim_{\alpha\to 1^-}(1-\alpha)\ \mathrm{cap}(L; \dot{\Lambda}_{\alpha, K}^{1,1})=  \mathrm{cap}(L; \dot{W}_K^{1,1}),
\end{equation}  where
\begin{equation*}   {\mathrm{cap}}(L;
\dot{W}_K^{1,1})=\inf \left\{\int_{\bbR^n} \|\nabla f(x) \|_{Z_1^*K}\,dx
:\quad
f\in C^\infty_0\quad\&\quad f\ge \mathbf{1}_L \right\}.
\end{equation*}

For general subset $E\subset \bbR^n$,  the anisotropic fractional Sobolev capacity (or the homogeneous end-point Besov capacity) of $E$ with respect to $K$,  denoted by  $\mathrm{cap}(E; \dot{\Lambda}_{\alpha, K}^{1,1})$, can be defined by  
\begin{equation} 
\mathrm{cap}(E; \dot{\Lambda}_{\alpha, K}^{1,1})=\inf_{\mathrm{open}\ O\supseteq E}\mathrm{cap}(O;\dot{\Lambda}_{\alpha, K}^{1,1})=\inf_{\mathrm{open}\ O\supseteq E}\left(\sup_{\mathrm{compact}\ L\subseteq O}\mathrm{cap}(L;\dot{\Lambda}_{\alpha, K}^{1,1})\right). \label{Related capacity-1} 
\end{equation}   Similarly, for general subset $E\subset \bbR^n$, 
$$
\mathrm{cap}(E; \dot{W}_K^{1,1})=\inf_{\mathrm{open}\ O\supseteq E}\mathrm{cap}(O;\dot{W}_K^{1,1})=\inf_{\mathrm{open}\ O\supseteq E}\left(\sup_{\mathrm{compact}\ L\subseteq O}\mathrm{cap}(L;\dot{W}_K^{1,1})\right).
$$
{See also \cite{Adams, Adams09, AdamsXiao, S, Xiao06, Xiao14a} for special case $K=\ball$}.

As a natural outcome of exploring some essential links between  \cite{Xiao06,Xiao14a} and \cite{L1,L2}, this paper will focus on the above-newly-introduced anisotropic fractional Sobolev capacity,  in particular, its immediate applications to the embedding/trace theory of the anisotropic Sobolev space with fractional order.  Section \ref{s2} is dedicated to some intrinsic properties of the anisotropic Sobolev capacity with fractional order.  Section \ref{extrinsic s3} is for the extrinsic nature of the anisotropic Sobolev capacity with fractional order via the so-called anisotropic fractioal perimeter. Moreover, estimation on the constant in the related Minkowski inequality, which is asymptotically optimal as $\alpha\rightarrow 0^+$,  will be provided.  The anisotropic fractional Sobolev inequalities and  their geometric counterparts for anisotropic fractional capacity will be discussed in Section \ref{s3}.

 \section{Intrinsic properties}\label{s2}
 
 We begin with exploring some intrinsic properties of the anisotropic Sobolev capacity with fractional order.

\begin{thm}\label{t1} The set-function $E\mapsto \mathrm{cap}(E; \dot{\Lambda}_{\alpha, K}^{1,1})$ is nonnegative and has the following properties.  \\
\noindent {\rm (i)}  Homogeneity:  let $r>0$ be a real constant, then 
 \begin{eqnarray*} \mathrm{cap}(rE; \dot{\Lambda}_{\alpha, K}^{1,1})=r^{n-\alpha}\mathrm{cap}(E; \dot{\Lambda}_{\alpha, K}^{1,1}), \ \ \mathrm{and} \ \    \mathrm{cap}(E; \dot{\Lambda}_{\alpha, rK}^{1,1})= r^{n+\alpha}\mathrm{cap}(E; \dot{\Lambda}_{\alpha, K}^{1,1}) .\end{eqnarray*} 
 Moreover, for all $r, s>0$, 
 $$
 \mathrm{cap}(sE; \dot{\Lambda}_{\alpha, rK}^{1,1}) = s^{n-\alpha} r^{n+\alpha}\mathrm{cap}(E; \dot{\Lambda}_{\alpha, K}^{1,1}).
 $$

\vskip 2mm \noindent{\rm (ii)} Monotonicity: for all subsets $E_1\subseteq E_2\subseteq\mathbb R^n$, one has $$\mathrm{cap}(E_1; \dot{\Lambda}_{\alpha, K}^{1,1})\le\mathrm{cap}(E_2; \dot{\Lambda}_{\alpha, K}^{1,1}).$$

\noindent {\rm (iii)} Subaddivity: for all compact sets $L_1, L_2\subseteq\mathbb R^n$, one has $$\mathrm{cap}(L_1\cup L_2;\dot{\Lambda}_{\alpha, K}^{1,1})
\le\mathrm{cap}(L_1; \dot{\Lambda}_{\alpha, K}^{1,1})+\mathrm{cap}(L_2; \dot{\Lambda}_{\alpha, K}^{1,1}). $$

\noindent {\rm (vi)} Upper-semi-continuity: for all decreasing sequence  $\{L_j\}_{j=1}^\infty$  of compact subsets of $\bbR^n$ with $L_1\supseteq L_2\supseteq L_3\supseteq\cdots$, one has
  $$\lim_{j\to\infty}\mathrm{cap}(L_j; \dot{\Lambda}_{\alpha, K}^{1,1})=\mathrm{cap}(\cap_{j=1}^\infty L_j; \dot{\Lambda}_{\alpha, K}^{1,1}).$$ 
\end{thm}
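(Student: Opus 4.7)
The plan is to exploit the variational definition of $\mathrm{cap}(\cdot\,;\dot{\Lambda}_{\alpha, K}^{1,1})$ on compact sets together with the open/compact approximation in \eqref{Related capacity-1}, and to dispatch each item in turn.

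For (i), I would argue by changing variables in the double integral defining the norm. Given admissible $f\in C_0^\infty$ with $f\ge \mathbf{1}_E$, set $g(x)=f(x/r)$; then $g\ge \mathbf{1}_{rE}$, and after substituting $x\mapsto rx$, $y\mapsto ry$ the Jacobian contributes $r^{2n}$ while $\|rx-ry\|_K^{n+\alpha}=r^{n+\alpha}\|x-y\|_K^{n+\alpha}$ contributes $r^{-(n+\alpha)}$, yielding $\|g\|_{\dot{\Lambda}_{\alpha, K}^{1,1}}=r^{n-\alpha}\|f\|_{\dot{\Lambda}_{\alpha, K}^{1,1}}$. Since $f\leftrightarrow g$ is a bijection between the admissible classes, taking infima gives the first identity. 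The second identity uses the positive homogeneity $\|\cdot\|_{rK}=r^{-1}\|\cdot\|_K$, which scales the norm by $r^{n+\alpha}$ without altering the admissible class. The third follows by composition. Each identity extends from compact to general $E$ through the open/compact double infimum by observing that $O\mapsto rO$ and $L\mapsto rL$ are bijections between open (respectively compact) supersets of $E$ and of $rE$.

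For (ii) and (iii), I first handle the compact case and then lift to general sets. If $L_1\subseteq L_2$ are compact, every admissible $f$ for $L_2$ satisfies $f\ge \mathbf{1}_{L_2}\ge \mathbf{1}_{L_1}$, hence is admissible for $L_1$; therefore $\mathrm{cap}(L_1;\dot{\Lambda}_{\alpha, K}^{1,1})\le\mathrm{cap}(L_2;\dot{\Lambda}_{\alpha, K}^{1,1})$, and monotonicity for open and then general sets follows from the suprema/infima in \eqref{Related capacity-1}. For subadditivity, note that the condition $f\ge \mathbf{1}_L$ automatically forces $f\ge 0$ everywhere; thus, for admissible $f_i\ge\mathbf{1}_{L_i}$, the sum $f_1+f_2\in C_0^\infty$ is nonnegative and dominates $\mathbf{1}_{L_1\cup L_2}$ pointwise. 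The triangle inequality for the double integral norm then produces (iii) after taking infima over admissible pairs.

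The main obstacle is (vi), the upper-semi-continuity. The inequality $\mathrm{cap}(\cap_j L_j;\dot{\Lambda}_{\alpha, K}^{1,1})\le \lim_j\mathrm{cap}(L_j;\dot{\Lambda}_{\alpha, K}^{1,1})$ is immediate from (ii), the limit existing because the sequence is monotone nonincreasing. For the reverse, fix $\varepsilon,\delta>0$ and pick $f\in C_0^\infty$ with $f\ge \mathbf{1}_{\cap_j L_j}$ and $\|f\|_{\dot{\Lambda}_{\alpha, K}^{1,1}}\le \mathrm{cap}(\cap_j L_j;\dot{\Lambda}_{\alpha, K}^{1,1})+\varepsilon$. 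The key step is that the set $U_\delta=\{x:(1+\delta)f(x)>1\}$ is open by continuity of $f$ and contains $\cap_j L_j$; consequently the compact sets $L_j\setminus U_\delta$ form a decreasing sequence with empty intersection, so the finite intersection property forces $L_j\setminus U_\delta=\emptyset$ for all $j$ past some $j_0$. This makes $(1+\delta)f$ admissible for every $L_j$ with $j\ge j_0$, whence $\mathrm{cap}(L_j;\dot{\Lambda}_{\alpha, K}^{1,1})\le (1+\delta)\bigl(\mathrm{cap}(\cap_j L_j;\dot{\Lambda}_{\alpha, K}^{1,1})+\varepsilon\bigr)$. Sending $j\to\infty$ and then $\varepsilon,\delta\to 0^+$ closes the loop.
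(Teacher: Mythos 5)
Your proposal is correct and, for parts (i), (ii) and (vi), follows essentially the same route as the paper: the same change of variables and the homogeneity $\|\cdot\|_{rK}=r^{-1}\|\cdot\|_K$ for (i), the same inclusion of admissible classes for (ii), and for (vi) the same device of enlarging a near-optimal $f$ for $\cap_j L_j$ to an admissible function for $L_j$ with $j$ large — you use $(1+\delta)f$ on the open set $\{(1+\delta)f>1\}$ and make the finite-intersection-property step explicit, while the paper passes through the compact level set $\{f\ge 1-\epsilon\}$ and the dilation $(1-\epsilon)^{-1}f$; these are interchangeable. The one genuine deviation is in (iii): the paper takes $f=\max\{f_1,f_2\}$ and uses $|f(x)-f(y)|\le|f_1(x)-f_1(y)|+|f_2(x)-f_2(y)|$, whereas you take the sum $f_1+f_2$ and the triangle inequality. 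Your choice is slightly cleaner, since the maximum of two $C_0^\infty$ functions need not be smooth (the paper's assertion $\max\{f_1,f_2\}\in C_0^\infty$ would strictly require a smoothing or a relaxation of the admissible class), while $f_1+f_2\in C_0^\infty$ is immediate and dominates $\mathbf{1}_{L_1\cup L_2}$ because admissible functions are automatically nonnegative. Both routes give the same subadditivity bound after taking infima.
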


\begin{proof} (i) Let $r> 0$.  First,  the desired equality 
$
\mathrm{cap}(E; \dot{\Lambda}_{\alpha, rK}^{1,1})=r^{n+\alpha}\mathrm{cap}(E; \dot{\Lambda}_{\alpha, K}^{1,1}) 
$ 
 follows  immediately from  
 $
 \|x-y\|_{rK}=r^{-1}\|x-y\|_K$ for all $x, y\in \bbR^n.$  
  
 To prove 
  $
 \mathrm{cap}(rE; \dot{\Lambda}_{\alpha, K}^{1,1})=r^{n-\alpha}\mathrm{cap}(E; \dot{\Lambda}_{\alpha, K}^{1,1}),
 $ 
 it is enough to prove the equality for compact sets, due to equation (\ref{Related capacity-1}). Consider $\|g\|_{\dot{\Lambda}_{\alpha, K} ^{1,1}}$ with $g(x)=f(rx)$ as follows:  \begin{eqnarray*}
\|g\|_{\dot{\Lambda}_{\alpha, K} ^{1,1}}&=&\int_{\mathbb R^n} \int_{\mathbb R^n}\frac{|g(x)-g(y)|}{\|x-y\|_K^{n+\alpha}}\,dx\,dy\\ &=&\int_{\mathbb R^n} \int_{\mathbb R^n}\frac{|f(rx)-f(ry)|}{\|rx-ry\|_K^{n+\alpha}}r^{\alpha-n} \,d(rx)\,d(ry)\\ &=&\int_{\mathbb R^n} \int_{\mathbb R^n}\frac{|f(x)-f(y)|}{\|x-y\|_K^{n+\alpha}}r^{\alpha-n} \,dx\,dy\\
&=&r^{\alpha-n} \|f\|_{\dot{\Lambda}_{\alpha, K} ^{1,1}}.
\end{eqnarray*} 
Hence, for all compact set $L\subset \bbR^n $, one has  
\begin{eqnarray*} \mathrm{cap}(rL, \dot{\Lambda}_{\alpha, K}^{1,1}) &=& \inf\big\{\|f\|_{\dot{\Lambda}_{\alpha, K}^{1,1}}:\quad
f\in C^\infty_0\quad\&\quad f\ge \mathbf{1}_{rL} \big\} \\ &=&  \inf\big\{r^{n-\alpha} \ \|g\|_{\dot{\Lambda}_{\alpha, K}^{1,1}}:\quad
f\in C^\infty_0\quad\&\quad g\ge \mathbf{1}_{L} \big\}\\
&=&r^{n-\alpha} \mathrm{cap}(L, \dot{\Lambda}_{\alpha, K}^{1,1}). 
\end{eqnarray*}  
 
 Finally, for all $r, s>0$, one has $$\mathrm{cap}(sE; \dot{\Lambda}_{\alpha, rK}^{1,1}) =s^{n-\alpha} \mathrm{cap}(E; \dot{\Lambda}_{\alpha, rK}^{1,1}) = s^{n-\alpha} r^{n+\alpha}\mathrm{cap}(E; \dot{\Lambda}_{\alpha, K}^{1,1}).$$

\vskip 2mm \noindent   (ii) It is enough to prove the monotonicity for compact sets, again due to equation (\ref{Related capacity-1}). For two compact sets $L_1$ and $L_2$ with $L_1\subset L_2$, it is easily checked that $$\{f\in C^{\infty}_0: f\geq \mathbf{1}_{L_1}\}\supset \{f\in C^{\infty}_0: f\geq \mathbf{1}_{L_2}\}.$$ Hence,  \begin{eqnarray*} \mathrm{cap}(L_1, \dot{\Lambda}_{\alpha, K}^{1,1}) &=& \inf\big\{\|f\|_{\dot{\Lambda}_{\alpha, K}^{1,1}}:\ 
f\in C^\infty_0\ \&\  f\ge \mathbf{1}_{L_1} \big\} \\ &\leq& \inf\big\{\|f\|_{\dot{\Lambda}_{\alpha, K}^{1,1}}:\ 
f\in C^\infty_0\ \&\  f\ge \mathbf{1}_{L_2} \big\}\\
&=&\mathrm{cap}(L_2, \dot{\Lambda}_{\alpha, K}^{1,1}) . 
\end{eqnarray*}

\vskip 2mm \noindent (iii) Without loss of generality, we may assume $\mathrm{cap}(L_j; \dot{\Lambda}_{\alpha, K}^{1,1})<\infty$ with $j=1,2$, as otherwise the consequence holds true trivially.  For any $\epsilon>0$, there are $f_1,f_2\in C^\infty_0$ such that 
$$
f_j\ge \mathbf{1}_{L_j}\ \ \&\ \ \|f_j\|_{\dot{\Lambda}_{\alpha, K}^{1,1}}<\mathrm{cap}(L_j; \dot{\Lambda}_{\alpha, K}^{1,1})+\epsilon, \quad\forall  j=1,2.
$$
Let $f=\max\{f_1,f_2\}\in C^\infty_0$ and clearly the function $f$ satisfies 
$$
f\ge \mathbf{1}_{L_1\cup L_2}\ \ \&\ \ |f(x)-f(y)|\le|f_1(x)-f_1(y)|+|f_2(x)-f_2(y)|,\ \ \forall  x,y\in\mathbb R^n.
$$
This further implies
$$
\mathrm{cap}(L_1\cup L_2; \dot{\Lambda}_{\alpha, K} ^{1,1})\le \|f\|_{\dot{\Lambda}_{\alpha, K}^{1,1}}\le  \|f_1\|_{\dot{\Lambda}_{\alpha, K}^{1,1}}+ \|f_2\|_{\dot{\Lambda}_{\alpha, K}^{1,1}}\le \mathrm{cap}(L_1; \dot{\Lambda}_{\alpha, K}^{1,1})+\mathrm{cap}(L_2; \dot{\Lambda}_{\alpha, K}^{1,1})+2\epsilon.
$$ 
The desired consequence follows by letting $\epsilon\rightarrow 0$. 

\vskip 2mm \noindent (iv) Suppose that $\{L_j\}_{j=1}^\infty$ is a decreasing sequence of compact subsets of $\mathbb R^n$. Then,
$L=\cap_{j=1}^\infty L_j$ is compact. For any $\epsilon\in (0,1)$, there is a function $f\in C_0^\infty$ such that
$$
f\ge \mathbf{1}_{L}\quad\&\quad \|f\|_{\dot{\Lambda}_{\alpha, K}^{1,1}}
< \mathrm{cap}(L; \dot{\Lambda}_{\alpha, K} ^{1,1})+\epsilon.
$$ 
Let 
$
L_{f, \epsilon}=: \{x\in\mathbb R^n: f(x)\ge 1-\epsilon\},
$
which is compact.  Due to $L_j$ decreasing to $L$, one can find an integer $j>0$ large enough, such that,  $L_j\subset L_{f, \epsilon}.$  By Part (ii) and formula (\ref{Related capacity-0}), one has, 
$$
\lim_{j\to\infty}\mathrm{cap}(L_j; \dot{\Lambda}_{\alpha, K}^{1,1})\le \mathrm{cap}\big(L_{f, \epsilon}; \dot{\Lambda}_{\alpha, K}^{1,1}\big)\le{(1-\epsilon)^{-1}\|f\|_{\dot{\Lambda}_{\alpha, K} ^{1,1}}}\le \frac{\mathrm{cap}(L; \dot{\Lambda}_{\alpha, K} ^{1,1})+\epsilon}{1-\epsilon}.
$$
Letting $\epsilon\to 0$ and again by Part (ii), we get
$$
\mathrm{cap}(L; \dot{\Lambda}_{\alpha, K}^{1,1})\le\lim_{j\to\infty}\mathrm{cap}(L_j; \dot{\Lambda}_{\alpha, K}^{1,1})\le \mathrm{cap}(L; \dot{\Lambda}_{\alpha, K}^{1,1}),
$$
and hence equality holds.
\end{proof}

 \begin{rem}
\label{r21} Along similar lines, one can  prove  analogous intrinsic result for the anisotropic Sobolev capacity $\mathrm{cap}(\cdot; \dot{W}^{1,1}_K)$, with  $\dot{\Lambda}^{1,1}_{\alpha,K}$ and $n\pm\alpha$ in Theorem \ref{t1} replaced by $\dot{W}^{1,1}_K$ and $n\pm 1$ respectively. 
\end{rem} 

\section{Extrinsic properties}\label{extrinsic s3}
 In this section, we will reveal  an extrinsic nature of the anisotropic Sobolev capacity with fractional order via the so-called anisotropic fractional perimeter.
 
For a set $E\subseteq\mathbb R^n$, let $E^c=\mathbb R^n\setminus E$ be the complement of $E\subset \bbR^n$. Define $P_\alpha(E,K)$, the anisotropic fractional $\alpha$-perimeter of $E$ with respect to $K$ \cite {L2},  as 
$$ P_\alpha(E, K)
=\int_{E}\int_{E^c}\frac{1}{\|x-y\|_K^{n+\alpha}}\,dxdy=\frac{\|\mathbf{1}_E\|_{\dot{\Lambda}_{\alpha, K}^{1,1}}}{2}.$$  
 Theorems 4 and 6 in \cite{L2} assert that, if $E\subset \bbR^n$ is a bounded Borel set of finite perimeter, then 
\begin{equation}\label{Monika-L2-1}
\lim_{\alpha\to 0^+}\alpha P_\alpha(E, K)=n V(E) V(K) \quad\&\quad \lim_{\alpha\to 1^-}(1-\alpha)P_\alpha(E, K)=P(E, Z_1K). 
\end{equation} 
 Here and henceforth,  $P(E,F)$ stands for
 the anisotropic perimeter of a Borel set $E\subset \bbR^n$ with respect to an origin-symmetric convex body $F$, which has the following form:   
 $$
 P(E, F)=\int _{\partial ^* E} \|\nu _E(x)\|_{F^*}\,d\mathcal{H}^{n-1}(x),
 $$ 
 with $\mathcal{H}^{n-1}$ the $(n-1)$ dimensional Hausdorff measure, $\nu_E(x)$ the measure theoretic outer unit normal of $E$ at point $x$ in  $\partial ^*E$, the reduced boundary of $E$. In particular, $P(E)=P(E, \ball)$ is called the perimeter of $E$. When $\partial E$, the boundary of $E$, is smooth, $P(E)$ is equal to the usual surface area of $\partial E$. On the other hand, $P(E, F)$ equals the classical mixed volume of $E$ and $F$,  if $E$ is also a convex body.  The special case $P_\alpha(E)=P_\alpha(E, \ball)$, named as the fractional $\alpha$-perimeter of $E$ (cf.  \cite{FMM}), is a classical object and receives  a lot of attention. In particular, by formula (\ref{Monika-L2-1}), one has, 
$$
\lim_{\alpha\to 0^+}\alpha P_\alpha(E)=n V(\ball) V(E)\quad\&\quad \lim_{\alpha\to 1^-}(1-\alpha)P_\alpha(E)=2^{-1}\tau_n P(E),
$$
where $\tau_n=\int _{\mathbb S^{n-1}}|\cos(\theta)|\,d\sigma$ with $\theta$ being the angle deviation from the vertical direction and $d\sigma$ being the standard area measure on  the unit sphere $\mathbb S^{n-1}$ of $\mathbb R^n$; see \cite{MS1, MS2}. 

The following cyclic inequality for the anisotropic fractional perimeters holds.  
\begin{prop} Let $0<\alpha<\beta<\gamma<1$. For all $E\subset \bbR^n$,  one has, $$ \big[P_{\beta}(E, K)\big]^{\gamma-\alpha}\leq \big[P_{\alpha}(E, K)\big]^{  \gamma-\beta} \  \big[P_{\gamma}(E, K)\big]^{\beta-\alpha }. $$  \end{prop}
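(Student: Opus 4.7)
The plan is to recognize the inequality as a log-convexity statement for the function $s\mapsto F(s):=\int_E\int_{E^c}\|x-y\|_K^{-s}\,dx\,dy$ evaluated at $s=n+\alpha,n+\beta,n+\gamma$, and to derive it from a single application of Hölder's inequality. Concretely, one checks that the only parameter relation used is $(\gamma-\alpha)=(\gamma-\beta)+(\beta-\alpha)$, which suggests writing $\beta$ as a convex combination of $\alpha$ and $\gamma$.

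First, I would set $\lambda=\frac{\gamma-\beta}{\gamma-\alpha}\in(0,1)$, so that $1-\lambda=\frac{\beta-\alpha}{\gamma-\alpha}$ and $\beta=\lambda\alpha+(1-\lambda)\gamma$; in particular,
\begin{equation*}
n+\beta=\lambda(n+\alpha)+(1-\lambda)(n+\gamma).
\end{equation*}
Then I would factor the integrand in $P_{\beta}(E,K)$ as
\begin{equation*}
\|x-y\|_K^{-(n+\beta)}=\Bigl(\|x-y\|_K^{-(n+\alpha)}\Bigr)^{\lambda}\Bigl(\|x-y\|_K^{-(n+\gamma)}\Bigr)^{1-\lambda},
\end{equation*}
and apply Hölder's inequality on the product measure space $E\times E^c$ with conjugate exponents $p=1/\lambda$ and $q=1/(1-\lambda)$. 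This yields
\begin{equation*}
P_{\beta}(E,K)\le\bigl[P_{\alpha}(E,K)\bigr]^{\lambda}\bigl[P_{\gamma}(E,K)\bigr]^{1-\lambda}.
\end{equation*}
Raising both sides to the power $\gamma-\alpha$ and substituting $\lambda(\gamma-\alpha)=\gamma-\beta$ and $(1-\lambda)(\gamma-\alpha)=\beta-\alpha$ delivers the claimed cyclic inequality.

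There is essentially no obstacle here: the only point requiring a small comment is that the inequality is trivial when $P_{\alpha}(E,K)$ or $P_{\gamma}(E,K)$ is infinite (both sides are then $\infty$ or vacuously valid), so one may freely assume finiteness and apply Hölder's inequality to the non-negative measurable integrand without integrability worries. I would also note that the same argument works verbatim for any Borel $E\subseteq\mathbb{R}^n$ and indeed gives log-convexity of the full map $s\mapsto\log F(s)$ on the range of $s$ for which $F(s)<\infty$; the stated inequality is just the instance $s\in\{n+\alpha,n+\beta,n+\gamma\}$.
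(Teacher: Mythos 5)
Your proposal is correct and follows essentially the same route as the paper: the paper also factors $\|x-y\|_K^{-(n+\beta)}$ with exponents $\frac{\gamma-\beta}{\gamma-\alpha}$ and $\frac{\beta-\alpha}{\gamma-\alpha}$, applies H\"older's inequality on $E\times E^c$, and raises to the power $\gamma-\alpha$. Your added remarks on trivial infinite cases and the log-convexity interpretation are harmless refinements of the same argument.
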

\begin{proof} Let $0<\alpha<\beta<\gamma<1$ which implies $0<\frac{\beta-\alpha}{\gamma-\alpha}<1$.  By H\"{o}lder's inequality, one has,  \begin{eqnarray*} 
 P_\beta(E, K)
  &=& \int_{E}\int_{E^c}\frac{1}{\|x-y\|_K^{n+\beta}}\,dxdy \nonumber \\ &=&  \int_{E}\int_{E^c} \left(\frac{1}{\|x-y\|_K^{n+\alpha}}\right)^{\frac{\gamma-\beta}{\gamma-\alpha}} \left(\frac{1}{\|x-y\|_K^{n+\gamma}}\right)^{\frac{\beta-\alpha}{\gamma-\alpha}}\,dx\,dy \nonumber \\&\leq&  \left(  \int_{E}\int_{E^c} \frac{1}{\|x-y\|_K^{n+\alpha}} \,dx\,dy\right)^{\frac{\gamma-\beta}{\gamma-\alpha}}    \left( \int_{E}\int_{E^c}  \frac{1}{\|x-y\|_K^{n+\gamma}} \,dx\,dy \right)^{\frac{\beta-\alpha}{\gamma-\alpha}} \nonumber \\&=& \big(P_\alpha(E, K)\big)^{\frac{\gamma-\beta}{\gamma-\alpha}}  \ \big( P_\gamma(E, K)\big)^{\frac{\beta-\alpha}{\gamma-\alpha}}. \end{eqnarray*} The desired inequality follows by taking power $\gamma-\alpha$ from both sides. \end{proof}

For bounded open set $E\subset \bbR^n$  with  $V(\partial E)=V(\overline{E}\setminus E)=0$, one has  \begin{eqnarray}\label{smooth-perimater}P_{\alpha}(\overline{E}, K)= P_{\alpha}( {E}, K).\end{eqnarray}  In fact, for all (fixed)  $y\in E\cup \overline{E}^c$, there is $r>0$, such that $\|y-x\|_K>r$ for all $x\in \overline{E}\setminus E$ as $E\cup\overline{E}^c$ is open. Hence, for all $y\in E\cup \overline{E}^c$,  \begin{eqnarray*} 0\leq  \int_{\overline{E}\setminus E} \frac{1}{\|x-y\|_K^{n+\alpha}}\,dx  \leq  \int_{\overline{E}\setminus E} \frac{1}{r^{n+\alpha}}\,dx=\frac{V(\overline{E}\setminus E)}{r^{n+\alpha}}=0. \end{eqnarray*}   This further implies that 
$$\int_{\overline{E}^c} \left(\int_{\overline{E}\setminus E}\frac{1}{\|x-y\|_K^{n+\alpha}}\,dx\right)dy=\int_{E}\left(\int_{\overline{E}\setminus E} \frac{1}{\|x-y\|_K^{n+\alpha}}\,dx\right) dy=0,$$  and thus, the desired formula (\ref{smooth-perimater}) holds: \begin{eqnarray*} 
P_{\alpha}(\overline{E}, K)- P_{\alpha}( {E}, K) &=& \int_{\overline{E}}\int_{\overline{E}^c}\frac{1}{\|x-y\|_K^{n+\alpha}}\,dxdy- \int_{E}\int_{E^c}\frac{1}{\|x-y\|_K^{n+\alpha}}\,dxdy\\ &=&  \int_{\overline{E}^c} \left(\int_{\overline{E}\setminus E}\frac{1}{\|x-y\|_K^{n+\alpha}}\,dx\right)dy- \int_{E}\left(\int_{\overline{E}\setminus E} \frac{1}{\|x-y\|_K^{n+\alpha}}\,dx\right) dy\\
&=&0.
\end{eqnarray*}  

 Similar to the proof of Theorem \ref{t1}, $P_{\alpha}(E, K)$ has the following homogeneity:  for all $r, s>0$, \begin{eqnarray} \label{homogeneity P(E,K)}  P_{\alpha}(sE, rK)=s^{n-\alpha} r^{n+\alpha} P_{\alpha}(E, K). \end{eqnarray}  It is known that $P_{\alpha}(E, K)\geq \gamma_{\alpha} (K) V(E)^{\frac{n-\alpha}{n}}$ holds true for every bound Borel set $E\subset \bbR^n$  with $\gamma_{\alpha}(K)>0$ a constant defined by  (cf. \cite{L2})  
\begin{equation} 
\label{fractional minkowski001}\gamma_{\alpha} (K)=\inf\{P_{\alpha}(E, K) V(E)^{-\frac{n-\alpha}{n}}: \ E\subset \Omega, V(E)>0\},
\end{equation}  
where $\Omega $ is a given and fixed open bounded subset of $\bbR^n$.   As claimed in   \cite{L2}, the constant $\gamma_{\alpha}(K)$ defined in formula (\ref{fractional minkowski001}) only depends on $K$ and is independent of  the choice of $\Omega$.  Heuristically,  formula (\ref{homogeneity P(E,K)}) indicates that ${\gamma_{\alpha}(K)}{V(K)^{-\frac{n+\alpha}{n}}}$ may be even independent of $K$. 
 
 Following the idea of verifying \cite[Lemma 6.1]{DPV}, we establish the following anisotropic isoperimetric inequality for $P_{\alpha}(E, K)$, which provides an estimate  for the constant $\gamma_{\alpha}(K)$.   
\begin{thm} \label{Minkowski type-1} Let $E$ be a bounded Borel subset of $\bbR^n$. The following  anisotropic isoperimetric inequality with fractional order $\alpha\in (0,1)$ holds: 
$$
\alpha P_{\alpha}(E, K)\geq {n} V (K)^{\frac{n+\alpha}{n}}  V(E)^{\frac{n-\alpha}{n}}.
$$ 
  Moreover, this inequality is asymptotically optimal in the sense of 
 $$
\lim_{\alpha\rightarrow 0^+} \alpha P_{\alpha}(E, K)=\lim_{\alpha\rightarrow 0^+} {n} V (K)^{\frac{n+\alpha}{n}}  V(E)^{\frac{n-\alpha}{n}}={n} V (K)  V(E). 
$$ 
\end{thm}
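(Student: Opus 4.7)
The plan is to adapt the strategy used in Lemma 6.1 of DPV for the Euclidean isotropic case (where $K=\ball$) to the present anisotropic setting, replacing symmetric decreasing rearrangement by a bathtub-principle comparison tailored to the norm $\|\cdot\|_K$. After fixing $x\in E$ and translating $z=y-x$ in the inner integral of $P_\alpha(E,K)$, the task reduces to a sharp lower bound, independent of $x$, for $\int_{A^c}\|z\|_K^{-(n+\alpha)}\,dz$ in terms of $V(A)$, where $A=E-x$ has volume $V(E)$.

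The first key step is to show that, among all Borel sets $A\subset\bbR^n$ with $V(A)=V(E)$, this integral is minimized by $A=rK$, where $r>0$ is determined by $r^{n}V(K)=V(E)$. I would prove this purely set-theoretically by writing $A^c=(A^c\cap rK)\cup(A^c\cap(rK)^c)$ and $(rK)^c=(A\cap(rK)^c)\cup(A^c\cap(rK)^c)$: the leftover pieces $A^c\cap rK$ and $A\cap(rK)^c$ have equal volume because $V(A)=V(rK)$, and the integrand $\|z\|_K^{-(n+\alpha)}$ is at least $r^{-(n+\alpha)}$ on the first piece and at most $r^{-(n+\alpha)}$ on the second, which is exactly the bathtub comparison.

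Next I would evaluate the model integral on $(rK)^c$ by invoking that the radial push-forward of Lebesgue measure through $\|\cdot\|_K$ has density $n V(K)\rho^{n-1}$, a fact that follows at once from $V(\rho K)=\rho^{n}V(K)$. A one-line calculation then produces
\begin{equation*}
\int_{(rK)^c}\|z\|_K^{-(n+\alpha)}\,dz=\frac{n}{\alpha}V(K)\,r^{-\alpha}=\frac{n}{\alpha}\,V(K)^{(n+\alpha)/n}V(E)^{-\alpha/n}.
\end{equation*}
Integrating this $x$-independent lower bound over $E$ yields $\alpha P_\alpha(E,K)\ge nV(K)^{(n+\alpha)/n}V(E)^{(n-\alpha)/n}$. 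The asymptotic sharpness is then free: the left-hand side tends to $nV(E)V(K)$ by the first half of (\ref{Monika-L2-1}), while the right-hand side tends to the same limit by continuity of the exponents $(n\pm\alpha)/n$ in $\alpha$.

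The main obstacle I anticipate is really just bookkeeping rather than any deep analytic difficulty: one must phrase the bathtub step so that it applies to an arbitrary bounded Borel set $E$ (and hence to $E-x$), without implicitly assuming regularity of $\partial E$ or openness. Once the comparison is formulated as a plain decomposition of $A^c$ and $(rK)^c$ into the same two volume-matched pieces, the argument goes through essentially by inspection, and the rest of the proof is routine change of variables and a direct passage to the limit $\alpha\to 0^+$.
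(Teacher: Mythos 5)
Your proposal is correct and follows essentially the same route as the paper: both fix $x\in E$, compare $E$ with the $K$-ball of equal volume centered at $x$ via the volume-matched (bathtub) decomposition, evaluate the resulting model integral $\tfrac{n}{\alpha}V(K)r^{-\alpha}$, integrate over $x\in E$, and deduce asymptotic optimality from the limit in (\ref{Monika-L2-1}). The only cosmetic differences are your translation to $A=E-x$ and the use of the radial push-forward density in place of the paper's Fubini/layer-cake computation.
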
  

\begin{proof} Let $E$ be a bounded Borel subset of $\bbR^n$. The desired inequality holds trivially if $V(E)=0$. Now let us consider $0<V(E)<\infty$, and let $r=\left(\frac{V(E)}{V(K)}\right)^{1/n}>0$. For any fixed $x\in E$, let 
$$
B_r(x)=\{z\in \bbR^n: \|z-x\|_K\leq r\}.
$$ 
In fact, the volume of $K$ is equal to $V(\{z: \|z\|_K\leq 1\})$ and hence the volume of $B_r(x)$ equals $V(E)$. This further implies 
\begin{eqnarray*}
V(E^c\cap B_r(x))&=& V(B_r(x)\setminus E)\\
&=&V(B_r(x))-V(E\cap B_r(x))\\ &=&V(E)-V(E\cap B_r(x))\\
&=&V(E\setminus B_r(x))\\
&=& V(B_r(x)^c\cap E). 
\end{eqnarray*}
Note that 
$\|y-x\|_K \leq r$ for $y\in E^c\cap B_r(x)$ and $\|y-x\|_K>r$ for $y\in B_r(x)^c\cap E$. 
Thus,  
\begin{eqnarray*}    
\int_{E^c\cap B_r(x)}\frac{dy}{\|x-y\|_K^{n+\alpha}}   &\geq&  \int_{E^c\cap B_r(x)}\frac{dy}{r^{n+\alpha}}\\
&=& \frac{V(E^c\cap B_r(x))}{r^{n+\alpha}}\\
& =&  \frac{V(B_r(x)^c\cap E)}{r^{n+\alpha}}\\ 
&=&  \int_{B_r(x)^c\cap E}\frac{dy}{r^{n+\alpha}}\\
&\geq& \int_{B_r(x)^c\cap E}\frac{dy}{\|x-y\|_K^{n+\alpha} }.  
\end{eqnarray*} 
This in turn implies   
\begin{eqnarray*} 
\int_{E^c}\frac{dy}{\|x-y\|_K^{n+\alpha}} &=&  \int_{E^c\cap B_r(x)}\frac{dy}{\|x-y\|_K^{n+\alpha}}+ \int_{E^c\cap B_r(x)^c}\frac{dy}{\|x-y\|_K^{n+\alpha}}  \\ &\geq&  \int_{B_r(x)^c\cap E}\frac{dy}{\|x-y\|_K^{n+\alpha} }+ \int_{E^c\cap B_r(x)^c}\frac{dy}{\|x-y\|_K^{n+\alpha}}\\
&=& \int_{ B_r(x)^c}\frac{dy}{\|x-y\|_K^{n+\alpha}},
\end{eqnarray*}  
where the last integral can be calculated by Fubini's theorem as follows: 
\begin{eqnarray*}
\int_{ B_r(x)^c}\frac{dy}{\|x-y\|_K^{n+\alpha}} &=& \int_{ \{y: \|y-x\|_K>r\}}\frac{dy}{\|x-y\|_K^{n+\alpha}}\\ &=& \int_{ \{y: \|y-x\|_K>r\}}\left( \int_{\|y-x\|_K}^{\infty} (n+\alpha)t^{-n-\alpha-1}\,dt\right)\,dy\\  &=&  \int_r^{\infty}(n+\alpha)t^{-n-\alpha-1} \left(\int_{ \{y: r< \|y-x\|_K\leq t\}} \,dy\right)\,dt\\ &=& V(K) \int_r^{\infty}(n+\alpha)t^{-n-\alpha-1} \left(t^n-r^n\right)\,dt\\ 
&=& \frac{n }{\alpha} \cdot r^{-\alpha} V(K)\\
&=&\frac{n }{\alpha} \cdot  \frac{V(K)^{1+\alpha/n}}{V(E)^{\alpha/n}}. 
\end{eqnarray*} 
Hence, one gets  
\begin{eqnarray*}P_{\alpha}(E, K)= \int _E \left(\int_{E^c}\frac{dy}{\|x-y\|_K^{n+\alpha}}\right)\,dx \geq \int_E \left(\int_{ B_r(x)^c}\frac{dy}{\|x-y\|_K^{n+\alpha}} \right)\,dx\geq   \frac{n }{\alpha} \cdot {V(K)^{\frac{n+\alpha}{n}}}{V(E)^{\frac{n-\alpha}{n}}}.  
\end{eqnarray*} 
The asymptotic optimality is a direct consequence of formula (\ref{Monika-L2-1}), i.e.,  $$n V(E)V(K)=\lim_{\alpha\rightarrow 0^+} \alpha P_{\alpha}(E, K) \geq  \lim_{\alpha\rightarrow 0^+}  n  {V(K)^{\frac{n+\alpha}{n}}}{V(E)^{\frac{n+\alpha}{n}}}=n  V(E) V(K).$$ \end{proof} 
The definition for $\gamma_{\alpha}(K)$ and Theorem \ref{Minkowski type-1} imply that  
\begin{eqnarray*}
\frac{n}{\alpha} V(K)^{\frac{n+\alpha}{n}}  \leq \inf\{P_{\alpha}(E, K) V(E)^{-\frac{n-\alpha}{n}}: \ E\subset \Omega, V(E)>0\} = \gamma_{\alpha}(K). 
\end{eqnarray*} That is, we have a lower bound for $\gamma_{\alpha}(K)$:
$$
\gamma_{\alpha}(K)\geq \frac{n}{\alpha} V (K)^{\frac{n+\alpha}{n}}.
$$

\begin{rem}
\label{r22}  It is well known that the anisotropic isoperimetric inequality  (cf. \cite[(1.4)]{FMP}) \begin{equation}\label{anisotropic isoperimetric---1}
P(E,K)\ge nV(K)^\frac{1}{n}V(E)^\frac{n-1}{n}
\end{equation}  can be obtained by the classical Brunn-Minkowski inequality \cite{DI}.  However, such an inequality cannot be obtained from Theorem \ref{Minkowski type-1} by  letting $\alpha\to 1^-$, if one notices the second limit of (\ref{Monika-L2-1}). On the other hand,  inequalities in Theorem \ref{Minkowski type-1} and the anisotropic isoperimetric inequality have two common features: the dimension $n$ appears in front of the products of the powered volumes, and the sums of the powers of $V(K)$ and $V(E)$ are constants:
$$
\frac{n+\alpha}{n}+\frac{n-\alpha}{n}=2\quad\&\quad  \frac{1}{n}+\frac{n-1}{n}=1.
$$ 

 As in \cite{FMP}, it may be interesting to study the deficit:
$$
\frac{\alpha P_{\alpha}(E, K)}{{n} V (K)^{\frac{n+\alpha}{n}}  V(E)^{\frac{n-\alpha}{n}}}-1;
$$
see \cite{FMM} for a PDE-based treatment of such a question with $K=B^n_2$.  We leave this for future investigation. 
\end{rem}

 The relation between the anisotropic fractional Sobolev capacity and the anisotropic factional perimeter is stated in the following theorem, which is an extension of \cite[Theorem 2]{Xiao14a} for $K=B^n_2$.

\begin{thm}
\label{t2} Let $L$ be a compact subset of $\mathbb R^n$. Then 
$$
\mathrm{cap}(L; \dot{\Lambda}_{\alpha, K}^{1,1})=2\inf_{O\in\mathsf{O}^\infty(L)}P_\alpha(O, K),
$$ 
where $\mathsf{O}^\infty(L)$ denotes the class of all open sets with $C^\infty$  boundary that contain $L$.
\end{thm}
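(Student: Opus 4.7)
The plan is to establish a two-sided inequality between $\mathrm{cap}(L; \dot{\Lambda}_{\alpha,K}^{1,1})$ and $2\inf_{O\in\mathsf{O}^\infty(L)} P_\alpha(O,K)$ by constructing admissible functions in one direction and an admissible family of open sets in the other. The definition (\ref{Related capacity-0}) of the capacity as an infimum over $C_0^\infty$ functions $f\geq\mathbf{1}_L$, together with the identity $2P_\alpha(O,K)=\|\mathbf{1}_O\|_{\dot{\Lambda}_{\alpha,K}^{1,1}}$ built into the definition of $P_\alpha$, makes both halves natural: the $\leq$ inequality is produced by smoothing $\mathbf{1}_O$, while the $\geq$ inequality is produced by slicing an arbitrary admissible $f$ according to its superlevel sets.

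For the upper bound, I would fix a bounded $O\in\mathsf{O}^\infty(L)$, insert an intermediate open set $U$ with $L\subset U\subset\overline{U}\subset O$, and set $f_\epsilon=\mathbf{1}_O\ast\phi_\epsilon$ for a standard nonnegative mollifier $\phi_\epsilon$ of support radius $\epsilon<\mathrm{dist}(\overline{U},\partial O)$. Then $f_\epsilon\in C_0^\infty$ and $f_\epsilon\equiv 1$ on $\overline{U}$, so $f_\epsilon\geq\mathbf{1}_L$. Using the pointwise bound $|f_\epsilon(x)-f_\epsilon(y)|\leq\int|\mathbf{1}_{O+z}(x)-\mathbf{1}_{O+z}(y)|\phi_\epsilon(z)\,dz$ together with Fubini and the evident translation invariance $\|\mathbf{1}_{O+z}\|_{\dot{\Lambda}_{\alpha,K}^{1,1}}=\|\mathbf{1}_O\|_{\dot{\Lambda}_{\alpha,K}^{1,1}}$, one obtains $\|f_\epsilon\|_{\dot{\Lambda}_{\alpha,K}^{1,1}}\leq 2P_\alpha(O,K)$, and hence $\mathrm{cap}(L;\dot{\Lambda}_{\alpha,K}^{1,1})\leq 2P_\alpha(O,K)$. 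Taking the infimum over $O$ closes this direction.

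For the reverse inequality, start from an admissible $f\in C_0^\infty$ with $f\geq\mathbf{1}_L$; since $\mathbf{1}_L\geq 0$, automatically $f\geq 0$, so the layer-cake identity gives $|f(x)-f(y)|=\int_0^\infty|\mathbf{1}_{\{f>t\}}(x)-\mathbf{1}_{\{f>t\}}(y)|\,dt$, and Fubini converts the $\dot{\Lambda}_{\alpha,K}^{1,1}$-seminorm of $f$ into $2\int_0^\infty P_\alpha(\{f>t\},K)\,dt$. Since $f\geq 1$ on $L$ and $f$ has compact support, $\{f>t\}$ is a bounded open set containing $L$ for every $t\in(0,1)$; by Sard's theorem applied to the smooth function $f$, almost every such $t$ is a regular value, whence $\partial\{f>t\}=\{f=t\}$ is a $C^\infty$ hypersurface and $\{f>t\}\in\mathsf{O}^\infty(L)$. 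Therefore $\|f\|_{\dot{\Lambda}_{\alpha,K}^{1,1}}\geq 2\int_0^1 P_\alpha(\{f>t\},K)\,dt\geq 2\inf_{O\in\mathsf{O}^\infty(L)}P_\alpha(O,K)$, and infimizing over $f$ completes the argument.

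The principal technical point is the layer-cake/Fubini manipulation in the lower bound: it only needs verification in the case $\|f\|_{\dot{\Lambda}_{\alpha,K}^{1,1}}<\infty$ (else the inequality is vacuous), and there the nonnegativity of $f$ is crucial, since it allows the outer absolute value to slide inside the $t$-integral without sign cancellations, so that Sard's theorem can deliver smooth level sets for almost every $t\in(0,1)$. The mollification in the upper bound is by comparison routine once translation invariance of the Besov seminorm is noted.
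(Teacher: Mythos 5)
Your argument is correct, and it splits the same way the paper does, but the two halves compare differently. The lower bound is essentially the paper's: both pass from an admissible $f\ge \mathbf{1}_L$ to its superlevel sets through the generalized co-area formula $\|f\|_{\dot{\Lambda}^{1,1}_{\alpha,K}}=2\int_0^\infty P_\alpha(\{f>t\},K)\,dt$ (the paper cites \cite{Vi}, you rederive it by layer cake plus Tonelli, which is fine since $f\ge\mathbf{1}_L\ge0$), and then bound the integrand on $t\in(0,1)$ by the infimum; your explicit use of Sard's theorem actually tightens a point the paper glosses, since it asserts $\{f>t\}\in\mathsf{O}^\infty(L)$ for every $t\in(0,1)$ while only almost every $t$ is a regular value (a.e.\ $t$ is all the integral inequality needs). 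The upper bound is where you genuinely diverge: the paper invokes Lemma 3.2 of \cite{HV} (as in the proof of part (ii) of Theorem \ref{equivalent perimeter geometric and analytic}) to produce a cutoff $g\in C_0^\infty$ with $g\equiv1$ on $\overline{O}$ and small energy over $O^c\times O^c$, which yields the stronger bound $\mathrm{cap}(\overline{O};\dot{\Lambda}^{1,1}_{\alpha,K})\le 2P_\alpha(O,K)$ (this is inequality (\ref{eq4}), reused later in the paper), whereas you mollify $\mathbf{1}_O$ and use translation invariance plus Tonelli to get $\|\mathbf{1}_O\ast\phi_\epsilon\|_{\dot{\Lambda}^{1,1}_{\alpha,K}}\le\|\mathbf{1}_O\|_{\dot{\Lambda}^{1,1}_{\alpha,K}}=2P_\alpha(O,K)$. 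Your route is more self-contained (no external lemma, and it never uses smoothness of $\partial O$), at the price of controlling only $\mathrm{cap}(L)$ rather than $\mathrm{cap}(\overline{O})$, which is all Theorem \ref{t2} requires. One caveat: you fix a \emph{bounded} $O$, so strictly you bound $\mathrm{cap}(L)$ by the infimum over bounded members of $\mathsf{O}^\infty(L)$; this is the correct reading of the class (otherwise, e.g., the complement of a tiny ball far from $L$ would drive the right-hand infimum to $0$ and falsify the identity), and the paper's own appeal to \cite{HV} presupposes the same restriction, so this is a matter of interpretation rather than a gap in your argument.
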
 
\begin{proof} Let $L\subset\mathbb R^n$ be compact. For $f\in C_0^\infty$ with $f\ge \mathbf {1}_L$, one has 
$$
L\subset\{x\in\mathbb R^n: f(x)>t\}, \quad\forall t\in (0,1). 
$$
The generalized co-area formula in \cite{Vi} (see also \cite{L2})  implies \begin{eqnarray}\label{co-area-1}
\|f\|_{\dot{\Lambda}_{\alpha, K}^{1,1}}&=&2\int_{0}^\infty P_\alpha\big(\{x\in\mathbb R^n: f(x)>t\}, K\big)\,dt\\ \nonumber&\ge& 2\int_0^1 P_\alpha\big(\{x\in\mathbb R^n: f(x)>t\}, K \big)\,dt \\\nonumber &\ge& 2\inf_{O\in \mathsf{O}^\infty(L)}P_\alpha(O, K),
\end{eqnarray} where the last inequality follows from 
$$
\{x\in\mathbb R^n: f(x)>t\}\in \mathsf{O}^\infty(L).
$$ 
Hence, formula (\ref{Related capacity-0}) implies 
$$
\mathrm{cap}(L; \dot{\Lambda}_{\alpha, K}^{1,1})\ge 2\inf_{O\in \mathsf{O}^\infty(L)}P_\alpha(O, K).
$$ On the other hand, similar to the proof of Theorem 3.1 in \cite{HV} (or the proof of Part (ii) of Theorem \ref{equivalent perimeter geometric and analytic} in this paper), one can prove that $$
\mathrm{cap}(L; \dot{\Lambda}_{\alpha, K}^{1,1})\le \mathrm{cap}(\overline{O}; \dot{\Lambda}_{\alpha, K}^{1,1})\le 2P_\alpha(O, K), \quad\forall O\in \mathsf{O}^\infty(L),
$$ where the first inequality is by Part (ii) of Theorem \ref{t1}.  This further implies that 
$$
\mathrm{cap}(L; \dot{\Lambda}_{\alpha, K}^{1,1})\le 2\inf_{O\in \mathsf{O}^\infty(L)}P_\alpha(O, K), $$
and the desired formula for $\mathrm{cap}(L; \dot{\Lambda}_{\alpha, K}^{1,1})$ follows.
\end{proof}

\begin{rem}
\label{r23}  Combining formula  (\ref{function-inequality-1}) and the first limit of \cite[p.90, line 5]{L2},  we can prove the following co-area formula 
$$
\int_{\bbR^n} \|\nabla f(x) \|_{Z_1^*K}\,dx=2\int_{0}^\infty P\big(\{x\in\mathbb R^n: f(x)>t\}, Z_1K\big)\,dt.
$$ Moreover,  Theorem \ref{t2} together with formulas (\ref{function-inequality-1}),  (\ref{Sobolev:limit--1}) and (\ref{Monika-L2-1})  imply that 
\begin{equation}\label{Sobolev:inequality--1}
\mathrm{cap}(L; \dot{W}^{1,1}_{K})=2\inf_{O\in\mathsf{O}^\infty(L)}P(O, Z_1K),
\end{equation} 
which extends \cite[Lemma 2.2.5]{Maz} for $K=\ball$ to the anisotropic case.  
\end{rem}

We now establish the anisotropic isocapacitary inequality with fractional order $\alpha\in (0, 1)$. 
 \begin{cor}\label{ccc-1}  Let $L$ be a compact subset of $\bbR^n$.   Then, the following anisotropic isocapacitary inequality with fractional order $\alpha\in (0, 1)$ holds:
  $$  
  \alpha\mathrm{cap}(L; \dot{\Lambda}_{\alpha, K}^{1,1})\geq {2n}V (K)^{\frac{n+\alpha}{n}}  V(L)^{\frac{n-\alpha}{n}}.
  $$ 
    Moreover, this inequality is asymptotically optimal in the sense of:
 $$
 \lim_{\alpha\rightarrow 0^+}\alpha\mathrm{cap}(L; \dot{\Lambda}_{\alpha, K}^{1,1}) =\lim_{\alpha\rightarrow 0^+} 2{n} V (K)^{\frac{n+\alpha}{n}}  V(L)^{\frac{n-\alpha}{n}}=2n V(K)V(L). 
 $$ 
 \end{cor}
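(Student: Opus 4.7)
The plan is to reduce the isocapacitary inequality to the isoperimetric inequality of Theorem \ref{Minkowski type-1} via the capacity-perimeter identity of Theorem \ref{t2}, and then use the limit formula (\ref{Sobolev:limit--1}) to confirm asymptotic optimality.

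First, for any $O\in\mathsf{O}^\infty(L)$, Theorem \ref{Minkowski type-1} applied to the bounded Borel set $O$ yields
$$
\alpha P_\alpha(O,K)\ge nV(K)^{\frac{n+\alpha}{n}}V(O)^{\frac{n-\alpha}{n}}.
$$
Since $L\subseteq O$ and $\frac{n-\alpha}{n}>0$, monotonicity of volume gives $V(O)^{\frac{n-\alpha}{n}}\ge V(L)^{\frac{n-\alpha}{n}}$, so
$$
\alpha P_\alpha(O,K)\ge nV(K)^{\frac{n+\alpha}{n}}V(L)^{\frac{n-\alpha}{n}}.
$$
Taking the infimum over all $O\in\mathsf{O}^\infty(L)$ (the right-hand side is independent of $O$) and invoking Theorem \ref{t2}, I obtain
$$
\alpha\,\mathrm{cap}(L;\dot{\Lambda}^{1,1}_{\alpha,K})=2\alpha\inf_{O\in\mathsf{O}^\infty(L)}P_\alpha(O,K)\ge 2nV(K)^{\frac{n+\alpha}{n}}V(L)^{\frac{n-\alpha}{n}},
$$
which is the desired inequality.

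For the asymptotic optimality, the left-hand side is handled by the first limit in (\ref{Sobolev:limit--1}), which gives
$$
\lim_{\alpha\to 0^+}\alpha\,\mathrm{cap}(L;\dot{\Lambda}^{1,1}_{\alpha,K})=2nV(L)V(K),
$$
while continuity of the exponential map in $\alpha$ yields
$$
\lim_{\alpha\to 0^+}2nV(K)^{\frac{n+\alpha}{n}}V(L)^{\frac{n-\alpha}{n}}=2nV(K)V(L),
$$
so the two sides share the same limit as $\alpha\to 0^+$.

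There is essentially no obstacle here: the inequality is a direct consequence of the two earlier theorems combined with the trivial monotonicity $V(L)\le V(O)$, and the asymptotic matching is immediate from (\ref{Sobolev:limit--1}). The only minor point requiring care is the positivity of the exponent $\frac{n-\alpha}{n}$ for $\alpha\in(0,1)$, which guarantees that passing from $V(O)$ to the smaller $V(L)$ preserves the direction of the inequality.
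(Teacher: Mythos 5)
Your proposal is correct and follows essentially the same route as the paper: capacity equals twice the infimum of $P_\alpha(O,K)$ over $O\in\mathsf{O}^\infty(L)$ (Theorem \ref{t2}), the fractional isoperimetric inequality of Theorem \ref{Minkowski type-1} bounds each $P_\alpha(O,K)$ from below, monotonicity $V(O)\ge V(L)$ finishes the inequality, and (\ref{Sobolev:limit--1}) gives the asymptotic optimality. The only cosmetic difference is that the paper inserts the constant $\gamma_\alpha(K)$ and its lower bound $\frac{n}{\alpha}V(K)^{\frac{n+\alpha}{n}}$ as an intermediate step, whereas you apply Theorem \ref{Minkowski type-1} to each $O$ directly; the substance is identical.
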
  
\begin{proof} Combining Theorems  \ref{Minkowski type-1} and \ref{t2}, one has 
\begin{eqnarray*} 
\mathrm{cap}(L; \dot{\Lambda}_{\alpha, K}^{1,1})&=&2\inf_{O\in\mathsf{O}^\infty(L)}P_\alpha(O, K)\\
& \geq&   \inf_{O\in\mathsf{O}^\infty(L)}\bigg(2\gamma_{\alpha}(K)  V(O)^{\frac{n-\alpha}{n}}\bigg)\\ 
&\geq& 2 \gamma_{\alpha}(K)  V(L)^{\frac{n-\alpha}{n}}\\
& \geq&   \frac{2n}{\alpha}\cdot V (K)^{\frac{n+\alpha}{n}}  V(L)^{\frac{n-\alpha}{n}}.
\end{eqnarray*} 
Together with formula (\ref{Sobolev:limit--1}), one has
$$
2nV(L) V(K)=
\lim_{\alpha\to 0^+}\alpha\ \mathrm{cap}(L; \dot{\Lambda}_{\alpha, K}^{1,1})
\geq   \lim_{\alpha\rightarrow 0^+} {2n}  V (K)^{\frac{n+\alpha}{n}}  V(L)^{\frac{n-\alpha}{n}}=2nV(L) V(K).
$$ \end{proof}

\begin{rem}
\label{r24} Similarly, inequality (\ref{anisotropic isoperimetric---1}) and formula (\ref{Sobolev:inequality--1}) imply the following anisotropic isocapacitary inequality: 
$$
\mathrm{cap}(L; \dot{W}^{1,1}_{K})\ge 2 n V(Z_1K)^\frac{1}{n}V(L)^\frac{n-1}{n}.
$$ 
\end{rem}

 \section{Anisotropic fractional Sobolev embeddings}\label{s3}

This section dedicates to establish the anisotropic fractional Sobolev inequalities (generated by the Radon-measure-based-Lebesgue-space $L^{n/\beta}_\mu$ on $\mathbb R^n$) and their geometric counterparts for anisotropic fractional capacity. 

First, we have the anisotropic extension of \cite[Theorem 3(i)]{Xiao14a}.

\begin{thm}
\label{t3} Let $\mu$ be a nonnegative Radon measure on $\bbR^n$, and let $0<\beta<\infty$ and $\kappa_{n,\alpha,\beta}>0$ be constants. Then the following two inequalities are equivalent:  
\vskip 2mm 

\noindent {\rm(i)} The analytic inequality  
\begin{equation}\label{eq1}
\|f\|_{L_{\mu}^\frac{n}{\beta}}\le \kappa_{n,\alpha,\beta}\left(\int_0^\infty \bigg(\mathrm{cap}\big(\{x\in\mathbb R^n: |f(x)|\ge t\}; \dot{\Lambda}_{\alpha, K}^{1,1}\big)\bigg)^\frac{n}{\beta}\,dt^\frac{n}{\beta}\right)^\frac{\beta}{n}, \ \forall f\in C_0^\infty;
\end{equation} 

\noindent {\rm(ii)} The  geometric inequality  \begin{equation}\label{eq2}
\big(\mu(\overline{O})\big)^\frac{\beta}{n}\le \kappa_{n,\alpha,\beta}\, \mathrm{cap}(\overline{O};\dot{\Lambda}_{\alpha, K}^{1,1}), \ \   \mathrm{for\ all\ bounded\ domain}\ \ O\subset\mathbb R^n\ \mathrm{with}\ C^\infty\ \mathrm{boundary}\ \partial O.
\end{equation}
 
  \end{thm}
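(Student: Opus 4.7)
The plan is to establish the equivalence through two separate implications, both revolving around the Cavalieri/layer-cake identity
\begin{equation*}
\|f\|_{L^{n/\beta}_\mu}^{n/\beta}=\int_0^\infty \mu\big(\{x\in\bbR^n: |f(x)|>t\}\big)\,dt^{n/\beta},
\end{equation*}
valid for every $f\in C_0^\infty$ by Fubini's theorem.

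For the direction (ii)$\Rightarrow$(i), I first fix $f\in C_0^\infty$ and invoke Sard's theorem to guarantee that for almost every $t>0$, both $t$ and $-t$ are regular values of $f$. For such $t$, the bounded open set $O_t=\{f>t\}\cup\{f<-t\}$ has $C^\infty$ boundary and satisfies $\overline{O_t}=\{|f|\geq t\}$, so (ii) applies to $O_t$ and yields
\begin{equation*}
\mu\big(\{|f|>t\}\big)\leq \mu(\overline{O_t})\leq \kappa_{n,\alpha,\beta}^{n/\beta}\,\mathrm{cap}\big(\{|f|\geq t\};\dot{\Lambda}_{\alpha,K}^{1,1}\big)^{n/\beta}.
\end{equation*}
Substituting this bound into the layer-cake identity, integrating against $dt^{n/\beta}$, and extracting a $\beta/n$-th power then produces (i).

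For the reverse direction (i)$\Rightarrow$(ii), I would test (i) against a mollified cutoff. Given a bounded domain $O$ with $C^\infty$ boundary and $\epsilon>0$, standard constructions produce $f_\epsilon\in C_0^\infty$ with $0\leq f_\epsilon\leq 1$, $f_\epsilon\equiv 1$ on $\overline{O}$, and $\mathrm{supp}(f_\epsilon)\subseteq \overline{O}+\epsilon\ball$. The pointwise bound $f_\epsilon\geq \mathbf{1}_{\overline{O}}$ forces $\mu(\overline{O})^{\beta/n}\leq\|f_\epsilon\|_{L^{n/\beta}_\mu}$, while $|f_\epsilon|\leq 1$ together with the support condition implies that $\{|f_\epsilon|\geq t\}\subseteq \overline{O}+\epsilon\ball$ for every $t\in(0,1]$ and is empty for $t>1$. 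Combining capacity monotonicity from Theorem \ref{t1}(ii) with the trivial normalization $\int_0^1 dt^{n/\beta}=1$,
\begin{equation*}
\int_0^\infty\mathrm{cap}\big(\{|f_\epsilon|\geq t\};\dot{\Lambda}_{\alpha,K}^{1,1}\big)^{n/\beta}\,dt^{n/\beta}\leq \mathrm{cap}\big(\overline{O}+\epsilon\ball;\dot{\Lambda}_{\alpha,K}^{1,1}\big)^{n/\beta}.
\end{equation*}
Feeding this into (i) applied to $f_\epsilon$ and sending $\epsilon\to 0^+$ along a countable sequence $\epsilon_j\downarrow 0$, the upper-semi-continuity from Theorem \ref{t1}(iv) (applied to the decreasing compact family $\{\overline{O}+\epsilon_j\ball\}$) passes $\mathrm{cap}(\overline{O}+\epsilon_j\ball;\dot{\Lambda}_{\alpha,K}^{1,1})$ to $\mathrm{cap}(\overline{O};\dot{\Lambda}_{\alpha,K}^{1,1})$ and delivers (ii).

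The principal obstacle lies in the first direction: one must verify via Sard's theorem that for almost every $t>0$ the open superlevel set $\{|f|>t\}$ genuinely has $C^\infty$ boundary and that its closure is exactly $\{|f|\geq t\}$, so that hypothesis (ii) can be invoked with matching capacity term on the right. The second direction, once the cutoff $f_\epsilon$ is in hand, reduces to the capacity monotonicity and upper-semi-continuity already furnished by Theorem \ref{t1}.
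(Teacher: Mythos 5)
Your proof is correct and follows essentially the same route as the paper: the layer-cake identity plus applying (ii) to the superlevel sets of $f$ for (ii)$\Rightarrow$(i), and testing (i) on smooth cutoff approximations of $\mathbf{1}_{\overline{O}}$ combined with monotonicity and upper-semi-continuity (Theorem \ref{t1}(ii),(iv)) for (i)$\Rightarrow$(ii). Your two refinements---invoking Sard's theorem so that the superlevel sets have $C^\infty$ boundary only for a.e.\ $t$, and using a mollified cutoff in place of the paper's truncated distance function (which is merely Lipschitz, not $C^\infty$ as claimed)---in fact tighten small regularity points that the paper glosses over.
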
 

\begin{proof}  By Fubini's theorem, one has, for all $f\in C_0^{\infty}$,  \begin{eqnarray}
\|f\|_{L_{\mu}^{\frac{n}{\beta}}}&=&\left(\int_{\bbR^n} |f(x)|^{\frac{n}{\beta}}\,d\mu(x)\right)^\frac{\beta}{n}  \nonumber
\\
&=&  \left(\int_{\bbR^n} \bigg[\int_0^{|f(x)|} {n}{\beta}^{-1} t^{\frac{n}{\beta}-1}\,dt\bigg]\,d\mu(x)\right)^\frac{\beta}{n} \nonumber\\  &=&\left(\int_{0}^{\infty}  \bigg[\int_{O_t(f)} {n}{\beta}^{-1} t^{\frac{n}{\beta}-1}\,d\mu(x)\bigg]\,dt\right)^\frac{\beta}{n}\nonumber\\ &=&\left(\int_0^\infty \mu\big(O_t(f)\big)\,dt^ \frac{n}{\beta}\right)^\frac{\beta}{n},  \label{level-equality---1} \end{eqnarray} 
where, for all $t>0$, $O_t(f)$ and $dt^ \frac{n}{\beta}$ are defined as
$$
O_t(f)=\{x\in\bbR^n:\ |f(x)|> t\}\quad\&\quad dt^ \frac{n}{\beta}={n}{\beta}^{-1} t^{\frac{n}{\beta}-1}\,dt.
$$ 

\vskip 2mm \noindent (ii) $\Rightarrow$ (i)  Suppose that inequality (\ref{eq2}) holds.  Note that, for  $f\in C_0^\infty$, the set  $
O_t(f) $   is a bounded open domain with $C^\infty$ boundary. Together with  inequality (\ref{eq2}) and formula (\ref{level-equality---1}), one gets the desired inequality (\ref{eq1}) as follows: \begin{eqnarray*} 
\|f\|_{L_{\mu}^ \frac{n}{\beta}}&=&\left(\int_0^\infty \mu\big(O_t(f)\big)\,dt^ \frac{n}{\beta}\right)^\frac{\beta}{n}\\
&\leq&  \left(\int_0^\infty \mu\big(\overline{O_t(f)}\big)\,dt^ \frac{n}{\beta}\right)^\frac{\beta}{n} \\ 
&\le&\kappa_{n,\alpha,\beta}\left(\int_0^\infty\Big(\mathrm{cap}\big(\overline{O_t(f)};\dot{\Lambda}_{\alpha, K}^{1,1} \big)\Big)^\frac{n}{\beta}\,dt^\frac{n}{\beta}\right)^\frac{\beta}{n}.
\end{eqnarray*}
 
\noindent (i) $\Rightarrow$ (ii) Suppose that inequality (\ref{eq1}) holds. For any bounded domain $O\subset\mathbb R^n$ with $C^\infty$ boundary $\partial O$ and $0<\epsilon<1$,   let $$
  f_\epsilon(x)=\left\{  \begin{array}{ll}
 1-\epsilon^{-1}\mathrm{dist}(x,\overline{O}), & \mathrm{if\  dist}(x,\overline{O})<\epsilon  \\ 
  0, & \mathrm{if\ dist}(x, \overline{O})\ge\epsilon  \\ 
  \end{array}\right.
  $$ where $\mathrm{dist}(x,E)$  denotes the Euclidean distance of a point $x$ to a set $E$. One can check that $f_\epsilon\in C^{\infty}_0$ and hence  inequality  (\ref{eq1}) holds for $f_\epsilon$. Moreover, 
  \begin{equation} \label{mu-equality-1} 
  \big(\mu(\overline{O})\big)^\frac{\beta}{n} =  \lim_{\epsilon\rightarrow 0^+} \|f_\epsilon\|_{L_{\mu}^{\frac{n}{\beta}}}. 
  \end{equation}  
  Let 
$
O_\epsilon=\{x\in\mathbb R^n:\ \mathrm{dist}(x,\overline{O})<\epsilon\}.
$
Inequality (\ref{eq1}) implies that  for all $0<\epsilon<1$,
\begin{eqnarray*}
 \|f_\epsilon\|_{L_{\mu}^{\frac{n}{\beta}}}&\leq& \kappa_{n,\alpha,\beta}
\left(\int_0^\infty \bigg(\mathrm{cap}\big(\overline{O_t(f_\epsilon)}; \dot{\Lambda}_{\alpha, K}^{1,1}\big)\bigg)^\frac{n}{\beta}\,dt^\frac{n}{\beta}\right)^\frac{\beta}{n}\\
&=&\kappa_{n,\alpha,\beta}
\left(\int_0^1 \big(\mathrm{cap}(\overline{O_t(f_\epsilon)}; \dot{\Lambda}_{\alpha, K}^{1,1})\big)^\frac{n}{\beta}\,dt^\frac{n}{\beta}\right)^\frac{\beta}{n}\\
 &\le&\kappa_{n,\alpha,\beta}\mathrm{cap}(\overline{O_\epsilon};\dot{\Lambda}_{\alpha, K}^{1,1}),
\end{eqnarray*} where the last inequality is due to Part (ii) of Theorem \ref{t1} and $\overline{O_t(f_\epsilon)}\subset \overline{O_\epsilon}$. Taking $\epsilon\rightarrow 0^+$, one gets inequality (\ref{eq2}) by  Part  (iv) of Theorem \ref{t1} and formula (\ref{mu-equality-1}). \end{proof}

  As a matter of fact, both inequalities (\ref{eq1}) and (\ref{eq2}) hold true for $\mu$ being the Lebesgue measure on $\bbR^n$ with constant $\kappa_{n,\alpha,n-\alpha}=\big({2\gamma_{\alpha} (K)}\big)^{-1}.$
Moreover, if the nonnegative Radon measure $\mu$ is absolutely continuous with respect to the Lebesgue measure on $\bbR^n$ and  $f(x)=\frac{\,d\mu}{\,dx}$ is  bounded on $\bbR^n$, then inequalities (\ref{eq1}) and (\ref{eq2}) hold true for some constant $\kappa_{n,\alpha,n-\alpha}$.   To this end,  it can be seen from the proof of Corollary \ref{ccc-1}  that for all  bounded domain $O\subset\mathbb R^n$ with $C^\infty$ boundary $ \partial O$,  
  \begin{equation*} 
\big(V({O})\big)^\frac{n-\alpha}{n}=\big(V(\overline{O})\big)^\frac{n-\alpha}{n}\le \big({2\gamma_{\alpha} (K)}\big)^{-1} \mathrm{cap}(\overline{O};\dot{\Lambda}_{\alpha, K}^{1,1}). \end{equation*} That is, inequality (\ref{eq2}) holds true with constant 
 $
\kappa_{n,\alpha,n-\alpha}=(2\gamma_{\alpha} (K))^{-1},
 $   
and so does inequality (\ref{eq1}) by Theorem \ref{t3}. Moreover, let $\mu$ be such that  $f(x)=\frac{\,d\mu}{\,dx}$  is  bounded on $\bbR^n$, say by $M<\infty$. For all bounded domain $O\subset\mathbb R^n$ with $C^\infty$ boundary $ \partial O$, one has,  $\mu(\overline{O})\leq M V(\overline{O})$,  and hence, \begin{equation*} 
\big(\mu(\overline{O})\big)^\frac{n-\alpha}{n}\leq M^\frac{n-\alpha}{n}  \big(V(\overline{O})\big)^\frac{n-\alpha}{n}\le {M^\frac{n-\alpha}{n} }\big({2\gamma_{\alpha} (K)}\big)^{-1} \, \mathrm{cap}(\overline{O};\dot{\Lambda}_{\alpha, K}^{1,1}). 
\end{equation*} 
That is, inequality (\ref{eq2}) holds true for $\mu$ with constant 
$ 
\kappa_{n,\alpha,n-\alpha}= M^\frac{n-\alpha}{n} (2\gamma_{\alpha} (K))^{-1},
$    
and so does inequality (\ref{eq1}) by Theorem \ref{t3}. 

\begin{rem}
\label{r31} Similar to Theorem \ref{t3} and comments after, one can get analogous results for the  anisotropic fractional Sobolev capacity $\mathrm{cap}(\cdot, \dot{W}_K^{1,1})$. More precisely, with $\mu$ and $\beta$ as in Theorem \ref{t3} and  $\kappa_{n, \beta}$ a constant,  the following two inequalities are equivalent:  
 \vskip 2mm \noindent
 (i) For all $f\in C_0^\infty$,
 $$
\|f\|_{L_{\mu}^\frac{n}{\beta}} \le \kappa_{n, \beta}\left(\int_0^\infty \bigg(\mathrm{cap}\big(\{x\in\mathbb R^n: |f(x)|\ge t\};
 \dot{W}_{K}^{1,1}\big)\bigg)^\frac{n}{\beta}\,dt^\frac{n}{\beta}\right)^\frac{\beta}{n}; 
$$ 
\noindent (ii) For  all bounded  domain $O\subset\mathbb R^n$ with $C^\infty$ boundary  $\partial O$,
$$
 \big(\mu(\overline{O})\big)^\frac{\beta}{n} \le\kappa_{n, \beta}\, \mathrm{cap}(\overline{O};\dot{W}_{K}^{1,1}).
$$
 Moreover, the above inequalities hold for $\mu$ being the Lebesgue measure on $\bbR^n$ with constant $\kappa_{n, n-1}=\big(2 n V(Z_1K)^\frac{1}{n}\big)^{-1}.$ \end{rem}

Second, we have the anisotropic version of \cite[Theorem 3 (ii)]{Xiao14a}.

\begin{thm} \label{equivalent perimeter geometric and analytic} Let $0<\beta<\infty$. The following inequalities hold and are equivalent: 

 \vskip 2mm 
 
 \noindent {\rm(i)} 
 The  analytic inequality 
 \begin{equation}\label{eq3}
\left(\int_0^\infty \bigg(\mathrm{cap}\big(\{x\in\mathbb R^n: |f(x)|\ge t\}; \dot{\Lambda}_{\alpha, K}^{1,1}\big)\bigg)^\frac{n}{\beta}\,dt^\frac{n}{\beta}\right)^\frac{\beta}{n}\le\|f\|_{\dot{\Lambda}^{1,1}_{\alpha, K}}, \ \ \ \ \forall f\in C_0^\infty;
\end{equation}

\noindent {\rm(ii)} The  geometric inequality 
\begin{equation}\label{eq4}
\mathrm{cap}(\overline{O};\dot{\Lambda}_{\alpha, K}^{1,1})\le 2 P_\alpha(\overline{O}, K), \quad  \mathrm{for\ all\ bounded\ domain}\ \ O\subset\mathbb R^n\ \mathrm{with}\ C^\infty\ \mathrm{boundary}\ \ \partial O.
\end{equation}
 \end{thm}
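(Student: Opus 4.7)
The plan is to prove (\ref{eq4}) directly from Theorem \ref{t2}, and then to argue the equivalence with (\ref{eq3}) in both directions. For (\ref{eq4}), I would take $L=\overline{O}$ in Theorem \ref{t2}: the regularized $\epsilon$-thickening $O_\epsilon=\{x:\mathrm{dist}(x,\overline{O})<\epsilon\}$ lies in $\mathsf{O}^\infty(\overline{O})$ for all sufficiently small $\epsilon>0$ (its boundary is $C^\infty$ because $\partial O$ is), so Theorem \ref{t2} yields $\mathrm{cap}(\overline{O};\dot{\Lambda}_{\alpha,K}^{1,1})\le 2P_\alpha(O_\epsilon,K)$. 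Sending $\epsilon\to 0^+$ and using (\ref{smooth-perimater}) produces $P_\alpha(O_\epsilon,K)\to P_\alpha(O,K)=P_\alpha(\overline{O},K)$.

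For (\ref{eq3}) $\Rightarrow$ (\ref{eq4}), I would feed the Lipschitz cut-off $f_\epsilon$ of $\mathbf{1}_{\overline{O}}$ from the proof of Theorem \ref{t3} into (\ref{eq3}) and send $\epsilon\to 0^+$. Since $\{|f_\epsilon|\ge t\}\supseteq\overline{O}$ for every $t\in(0,1]$, Theorem \ref{t1}(ii) yields $\mathrm{cap}(\{|f_\epsilon|\ge t\};\dot{\Lambda}_{\alpha,K}^{1,1})\ge\mathrm{cap}(\overline{O};\dot{\Lambda}_{\alpha,K}^{1,1})$, whence the left-hand side of (\ref{eq3}) at $f_\epsilon$ dominates $\mathrm{cap}(\overline{O};\dot{\Lambda}_{\alpha,K}^{1,1})\cdot(\int_0^1 dt^{n/\beta})^{\beta/n}=\mathrm{cap}(\overline{O};\dot{\Lambda}_{\alpha,K}^{1,1})$. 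Meanwhile, the right-hand side $\|f_\epsilon\|_{\dot{\Lambda}_{\alpha,K}^{1,1}}$ tends to $2P_\alpha(\overline{O},K)$, the lower bound coming from Fatou and the upper bound from an explicit estimate of the contribution of the transition annulus $O_\epsilon\setminus\overline{O}$, using the bound $|f_\epsilon(x)-f_\epsilon(y)|\le\min(1,\epsilon^{-1}\|x-y\|_K)$ together with the finiteness of $P_\alpha(\overline{O},K)$.

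For the converse (\ref{eq4}) $\Rightarrow$ (\ref{eq3}), I would start from the co-area formula (\ref{co-area-1}), $\|f\|_{\dot{\Lambda}_{\alpha,K}^{1,1}}=2\int_0^\infty P_\alpha(\{|f|>t\},K)\,dt$. By Sard's theorem applied to $|f|$, for a.e.\ $t>0$ the super-level set $\{|f|>t\}$ is a bounded open domain with $C^\infty$ boundary and satisfies $\overline{\{|f|>t\}}=\{|f|\ge t\}$; thus (\ref{eq4}) combined with (\ref{smooth-perimater}) yields the pointwise-in-$t$ estimate $\mathrm{cap}(\{|f|\ge t\};\dot{\Lambda}_{\alpha,K}^{1,1})\le 2P_\alpha(\{|f|>t\},K)$ for a.e.\ $t$. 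When $\beta=n$ this integrates directly into (\ref{eq3}). For general $\beta$ the main obstacle I anticipate is passing from the $L^1$ form $\int_0^\infty\mathrm{cap}(\{|f|\ge t\})\,dt\le\|f\|_{\dot{\Lambda}_{\alpha,K}^{1,1}}$ to the $L^{n/\beta}$-type form in (\ref{eq3}) with the sharp constant $1$; the standard Maz'ya-style dyadic truncation $f_k=\min((|f|-2^k)_+,2^k)$ gives the telescoping identity $\sum_k\|f_k\|_{\dot{\Lambda}_{\alpha,K}^{1,1}}=\|f\|_{\dot{\Lambda}_{\alpha,K}^{1,1}}$ (since $|f(x)-f(y)|=\sum_k|f_k(x)-f_k(y)|$) along with the test-function bound $2^k\mathrm{cap}(\{|f|\ge 2^{k+1}\};\dot{\Lambda}_{\alpha,K}^{1,1})\le\|f_k\|_{\dot{\Lambda}_{\alpha,K}^{1,1}}$, and combining these with the embedding $\ell^{n/\beta}\hookrightarrow\ell^1$ and a refinement of the geometric ratio from $2$ toward $1^+$ should, in the limit, recover (\ref{eq3}).
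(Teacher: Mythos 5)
Your treatment of (\ref{eq3})$\Rightarrow$(\ref{eq4}) is essentially the paper's: it also feeds the cut-off $f_\epsilon$ into (\ref{eq3}), uses Theorem \ref{t1}(ii), and passes to the limit $\|f_\epsilon\|_{\dot{\Lambda}^{1,1}_{\alpha,K}}\to 2P_\alpha(\overline{O},K)$ (the paper invokes dominated convergence; your explicit annulus estimate is if anything more careful). Two caveats on your proof that (\ref{eq4}) itself holds. First, the paper proves (\ref{eq4}) directly by the Hurri-Syrj\"anen--V\"ah\"akangas construction: a $g\in C_0^\infty$ with $g\ge\mathbf{1}_{\overline{O}}$ and $\int_{O^c}\int_{O^c}|g(x)-g(y)|\,\|x-y\|_K^{-n-\alpha}dxdy<\epsilon$; in fact the ``$\le$'' half of Theorem \ref{t2} is justified in the paper precisely by reference to this argument, so deriving (\ref{eq4}) from Theorem \ref{t2} is borderline circular unless you ground Theorem \ref{t2} in \cite{HV} independently. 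Second, the limit $P_\alpha(O_\epsilon,K)\to P_\alpha(\overline{O},K)$ does not follow from (\ref{smooth-perimater}); you need something like the subadditivity $P_\alpha(O_\epsilon,K)\le P_\alpha(\overline{O},K)+P_\alpha(O_\epsilon\setminus\overline{O},K)$ together with a tubular-shell estimate (for $x$ in the shell, $\int_{\{\|y-x\|_K\ge d(x)\}}\|x-y\|_K^{-n-\alpha}dy\simeq d(x)^{-\alpha}$, whence $P_\alpha(O_\epsilon\setminus\overline{O},K)\lesssim\epsilon^{1-\alpha}\to0$). This is fillable, but it is a gap as written.

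The genuine gap is in (\ref{eq4})$\Rightarrow$(\ref{eq3}) for $\beta\neq n$: the Maz'ya-type truncation cannot recover the constant $1$, and (\ref{eq3}) leaves no room for a constant. With levels of ratio $q>1$, the slab bound $\mathrm{cap}(\{|f|\ge q^{k+1}\};\dot{\Lambda}^{1,1}_{\alpha,K})\le (q^{k+1}-q^k)^{-1}\|f_k\|_{\dot{\Lambda}^{1,1}_{\alpha,K}}$ combined with $\sum_k\|f_k\|\le\|f\|$ and $\ell^1\supset\ell^{n/\beta}$ yields (\ref{eq3}) only with the factor $\tfrac{q}{q-1}\,(q^{n/\beta}-1)^{\beta/n}$, which is strictly greater than $1$ for every $q$ (about $3.3$ when $n/\beta=2$) and, contrary to your expectation, blows up rather than tends to $1$ as $q\to1^+$ when $\beta<n$, because each level's capacity is charged to a single slab of height $\sim(q-1)q^k$. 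What the paper does instead is elementary and sharp: since $t\mapsto\mathrm{cap}(\overline{O_t(f)};\dot{\Lambda}^{1,1}_{\alpha,K})$ is nonincreasing, $t\,\mathrm{cap}(\overline{O_t(f)})\le\int_0^t\mathrm{cap}(\overline{O_s(f)})\,ds$, hence $t^{\frac{n}{\beta}-1}\big(\mathrm{cap}(\overline{O_t(f)})\big)^{\frac{n}{\beta}}\le\tfrac{\beta}{n}\tfrac{d}{dt}\big(\int_0^t\mathrm{cap}(\overline{O_s(f)})\,ds\big)^{\frac{n}{\beta}}$, and integrating gives $\big(\int_0^\infty(\mathrm{cap}(\overline{O_t(f)}))^{\frac{n}{\beta}}\,dt^{\frac{n}{\beta}}\big)^{\frac{\beta}{n}}\le\int_0^\infty\mathrm{cap}(\overline{O_t(f)})\,dt$ with constant exactly one; then (\ref{eq4}), (\ref{smooth-perimater}) and the coarea formula (\ref{co-area-1}) bound the right-hand side by $\|f\|_{\dot{\Lambda}^{1,1}_{\alpha,K}}$. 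Your $L^1$ intermediate bound $\int_0^\infty\mathrm{cap}\,dt\le\|f\|_{\dot{\Lambda}^{1,1}_{\alpha,K}}$ (via Sard) is exactly the right halfway point; what is missing is this monotonicity (Hardy-type) lemma, not a truncation argument. Note also that both this lemma and your truncation require $n/\beta\ge1$, so neither route covers $\beta>n$ as literally stated in the theorem.
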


\begin{proof}   We first prove that  inequality (\ref{eq4}) holds and is equivalent to inequality (\ref{eq3}), and hence inequality  (\ref{eq3}) holds automatically.

\vskip 2mm   The proof of  inequality (\ref{eq4}) is similar to that of Theorem 3.1 in \cite{HV}. For completeness, we include a brief proof here. Let $O\subset\mathbb R^n$ be a bounded domain with $C^\infty$ boundary $\partial O$. Recall that $\|\cdot\|$ is equivalent to $\|\cdot\|_K$ for any given origin-symmetric convex body $K$.  By Lemma 3.2 in \cite{HV},  for all $\epsilon>0$, one can find a function $g\in C^{\infty}_0$, such that, $0\leq g\leq 1$,  $g(x)=1$ for $x\in \overline{O}$ (which implies $g\geq \mathbf{1}_{\overline{O}}$), and $$\int_{ O^c} \int_{O^c} \frac{|g(x)-g(y)|}{\|x-y\|_K^{n+\alpha}}\,dx\,dy<\epsilon.$$
Hence, formulas (\ref{Related capacity-0}) and  (\ref{smooth-perimater}),   together with $g\in C^{\infty}_0$ and $g\geq \mathbf{1}_{\overline{O}}$, imply    \begin{eqnarray*} 
\mathrm{cap}(\overline{O}; \dot{\Lambda}_{\alpha, K}^{1,1})&\leq& \int_{ \bbR^n} \int_{\bbR^n} \frac{|g(x)-g(y)|}{\|x-y\|_K^{n+\alpha}}\,dx\,dy\\
&\leq & 2\int_{ O} \int_{O^c} \frac{|g(x)-g(y)|}{\|x-y\|_K^{n+\alpha}}\,dx\,dy+\int_{O^c} \int_{O^c} \frac{|g(x)-g(y)|}{\|x-y\|_K^{n+\alpha}}\,dx\,dy\\&<& 2 P_{\alpha}(O, K)+\epsilon\\
&=&2 P_{\alpha}(\overline{O}, K)+\epsilon.
\end{eqnarray*} The desired inequality (\ref{eq4}) follows by taking $\epsilon\rightarrow 0^+$.

 \vskip 2mm  Now we prove the equivalence between inequalities (\ref{eq3}) and (\ref{eq4}).  First, we assume that  inequality (\ref{eq3}) holds true.  Let $\epsilon\in (0,1)$ and  $O\subset\mathbb R^n$ be  a bounded domain  with $C^\infty$ boundary $\partial O$.   Let $O_\epsilon$ and $f_\epsilon$ be as in the proof of Theorem \ref{t3}.   Also note that  $f_\epsilon(x)=1$ for all $x\in \overline{O}$, and hence $\overline{O}\subset \overline{O_t(f_\epsilon)}$ for all $\epsilon\in (0, 1)$ and $t\in (0, 1)$. By Part (ii) of Theorem \ref{t1} and inequality (\ref{eq3}), one has  
 \begin{eqnarray*} \mathrm{cap}(\overline{O}; \dot{\Lambda}_{\alpha, K}^{1,1})&\le&\left(\int_0^1 \bigg(\mathrm{cap}\big(\overline{O_t(f_\epsilon)}; \dot{\Lambda}_{\alpha, K}^{1,1}\big)\bigg)^\frac{n}{\beta}\,dt^\frac{n}{\beta}\right)^\frac{\beta}{n}\\
 &\le&\left(\int_0^{\infty} \bigg(\mathrm{cap}\big(\overline{O_t(f_\epsilon)}; \dot{\Lambda}_{\alpha, K}^{1,1}\big)\bigg)^\frac{n}{\beta}\,dt^\frac{n}{\beta}\right)^\frac{\beta}{n}\\
 &\leq&  \|f_\epsilon\|_{\dot{\Lambda}^{1,1}_{\alpha, K}}. 
 \end{eqnarray*}  
 As $f_{\epsilon}(x)\rightarrow \mathbf{1}_{\overline{O}}$, the dominated convergent theorem implies the desired inequality (\ref{eq4}): \begin{eqnarray*} \mathrm{cap}(\overline{O}; \dot{\Lambda}_{\alpha, K}^{1,1}) \leq \lim_{\epsilon\rightarrow 0^+} \|f_\epsilon\|_{\dot{\Lambda}^{1,1}_{\alpha, K}}=
 \|\mathbf{1}_{\overline{O}}\|_{\dot{\Lambda}^{1,1}_{\alpha, K}}=2P_\alpha(\overline{O}, K).\end{eqnarray*} 

Second, we assume that  inequality (\ref{eq4}) holds.  Note that $O_t(f)\subset O_s(f)$  holds for any function $f\in C^\infty_0$ and $0<s<t$. Part (ii) of Theorem \ref{t1} implies that $\mathrm{cap}\big(\overline{O_t(f)};\ \dot{\Lambda}^{1,1}_{\alpha, K}\big)$ is decreasing  on $t\in [0,\infty)$.  Hence,  

\begin{eqnarray*} 
 t^{\frac{n}{\beta}-1}\bigg(\mathrm{cap}\big(\overline{O_t(f)}; \dot{\Lambda}^{1,1}_{\alpha, K}\big)\bigg)^\frac{n}{\beta} 
&=&\bigg(t\ \mathrm{cap}\big(\overline{O_t(f)};\ \dot{\Lambda}^{1,1}_{\alpha, K}\big)\bigg)^{\frac{n}{\beta}-1}\mathrm{cap}\big(\overline{O_t(f)};\ \dot{\Lambda}^{1,1}_{\alpha, K} \big)\\
&\le& \left(\int_0^t\mathrm{cap}\big(\overline{O_s(f)};\ \dot{\Lambda}^{1,1}_{\alpha, K}\big)\,ds\right)^{\frac{n}{\beta}-1}\mathrm{cap}\big(\overline{O_t(f)};\ \dot{\Lambda}^{1,1}_{\alpha, K}\big)\\
&=& \frac{\beta}{n} \cdot  \frac{d}{dt}\left(\int_0^t \mathrm{cap}\big(\overline{O_s(f)};\ \dot{\Lambda}^{1,1}_{\alpha, K}\big)\,ds\right)^{\frac{n}{\beta}}. \end{eqnarray*} 
Integrating the above inequality over $t\in (0, \infty)$, one has  
\begin{eqnarray*} 
\int_0^\infty \bigg(\mathrm{cap}\big(\overline{O_t(f)};\ \dot{\Lambda}^{1,1}_{\alpha, K}\big) \bigg)^{\frac{n}{\beta}}\,dt^\frac n{\beta} &=&  \frac{n}{\beta} \cdot \int_0^{\infty} 
   t^{\frac{n}{\beta}-1}\bigg(\mathrm{cap}\big(\overline{O_t(f)}; \dot{\Lambda}^{1,1}_{\alpha, K}\big)\bigg)^\frac{n}{\beta}\,dt \\ 
&\leq& \int_0^{\infty} \frac{d}{dt}\left(\int_0^t \mathrm{cap}\big(\overline{O_s(f)};\ \dot{\Lambda}^{1,1}_{\alpha, K}\big)\,ds\right)^{\frac{n}{\beta}}\,dt\\
&=&\left(\int_0^\infty \mathrm{cap}\big(\overline{O_s(f)};\ \dot{\Lambda}^{1,1}_{\alpha, K}\big)\,ds\right)^{\frac{n}{\beta}}.
\end{eqnarray*}  
Hence, inequality  (\ref{eq4}) and the co-area formula (\ref{co-area-1}) imply the desired inequality (\ref{eq3}):  
\begin{eqnarray*} 
\left(\int_0^\infty \bigg(\mathrm{cap}\big(\overline{O_t(f)};\ \dot{\Lambda}^{1,1}_{\alpha, K}\big)\bigg)^{\frac{n}{\beta}}\,dt^\frac n{\beta}\right)^\frac{\beta}{n}&\le&
 \int_0^\infty\mathrm{cap}\big(\overline{O_t(f)};\dot{\Lambda}^{1,1}_{\alpha, K}\big)\,dt\\
 &\leq&  2\int_0^\infty P_\alpha\big(\overline{O_t(f)}, K\big)\,dt\\
 &=&2\int_0^\infty P_\alpha\big( {O_t(f)}, K\big)\,dt\\
 &=& \|f\|_{\dot{\Lambda}_{\alpha, K}^{1,1}}.  
\end{eqnarray*} 
\end{proof}

\begin{rem}
\label{r32} Similar result for anisotropic Sobolev capacity  $\mathrm{cap}(\cdot, \dot{W}_K^{1,1})$ also holds and is an extension of \cite[Theorem 1.1]{Xiao07}.   More precisely, with $0<\beta<n$,  the following inequalities hold and are equivalent:  

\vskip 2mm \noindent 
(i) For all $f\in C_0^\infty$, $$
\left(\int_0^\infty \bigg(\mathrm{cap}\big(\{x\in\mathbb R^n: |f(x)|\ge t\}; \dot{W}_{K}^{1,1}\big)\bigg)^\frac{n}{\beta}\,dt^\frac{n}{\beta}\right)^\frac{\beta}{n}\le \int_{\bbR^n} \|\nabla f(x)\|_{Z_1^*K}\,dx; 
$$

\noindent (ii) For  all  bounded  domain $O\subset\mathbb R^n$  with $C^\infty$  boundary  $\partial O$,
$$
\mathrm{cap}(\overline{O};\dot{W}_{K}^{1,1})\le 2 P (\overline{O}, Z_1K).
$$ \end{rem}

Finally, as a more general formulation of \cite[Theorem 4]{Xiao14a} and \cite[Theorem 9]{L2}, we have the following equivalence.

\begin{thm}\label{t4} Let $\mu$ be a nonnegative Radon measure on $\bbR^n$, and  $0<\beta\le n$ and $\kappa_{n,\alpha,\beta}>0$ are constants. The following three inequalities are equivalent:
 
\vskip 2mm 

\noindent {\rm(i)} The anisotropic fractional Sobolev inequality  $$\|f\|_{L_{\mu}^\frac{n}{\beta}}\le \kappa_{n,\alpha,\beta}\|f\|_{\dot{\Lambda}_{\alpha, K}^{1,1}}, \ \  \mathrm{for\ all}\  f\in C_0^\infty;
$$ 

\noindent {\rm(ii)}  The anisotropic fractional isocapacitary inequality $$\big(\mu(\overline{O})\big)^\frac{\beta}{n}\le \kappa_{n,\alpha,\beta}\mathrm{cap}(\overline{O},\dot{\Lambda}_{\alpha, K}^{1,1}), \ \ \mathrm{for\ any\ bounded\ domain}\  O\subset\mathbb R^n \ \mathrm{with}\  C^\infty\ \mathrm{boundary}\ \partial O;$$ 

\noindent {\rm(iii)} The anisotropic  fractional isoperimetric inequality $$\big(\mu(\overline{O})\big)^\frac{\beta}{n}\le 2\kappa_{n,\alpha,\beta}P_\alpha(\overline{O}, K), \ \ \mathrm{for\ any\ bounded\ domain}\  O\subset\mathbb R^n\  \mathrm{with}\ C^\infty\ \mathrm{boundary}\  \partial O.$$ 
\end{thm}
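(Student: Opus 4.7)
The plan is to establish the cycle (i) $\Rightarrow$ (ii) $\Rightarrow$ (iii) $\Rightarrow$ (ii) $\Rightarrow$ (i), reducing everything to the three substantive results already in hand: Theorem \ref{t3} (equivalence of a layer-cake capacitary Sobolev inequality with the geometric capacitary inequality), Theorem \ref{equivalent perimeter geometric and analytic} (the comparison $\mathrm{cap}(\overline{O};\dot{\Lambda}^{1,1}_{\alpha,K})\le 2P_\alpha(\overline{O},K)$ together with its analytic counterpart (\ref{eq3})), and Theorem \ref{t2} (the identity $\mathrm{cap}(\overline{O};\dot{\Lambda}^{1,1}_{\alpha,K})=2\inf_{O'\in\mathsf{O}^\infty(\overline{O})}P_\alpha(O',K)$).

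For (i) $\Rightarrow$ (ii): fix a bounded domain $O$ with $C^\infty$ boundary. For any $f\in C_0^\infty$ with $f\ge\mathbf{1}_{\overline{O}}$, the pointwise bound $|f|^{n/\beta}\ge 1$ on $\overline{O}$ yields $\mu(\overline{O})^{\beta/n}\le\|f\|_{L^{n/\beta}_\mu}$; applying (i) and then taking the infimum over admissible $f$ produces (ii) via the defining formula (\ref{Related capacity-0}), with the same constant $\kappa_{n,\alpha,\beta}$. For (ii) $\Rightarrow$ (iii): chain (ii) with inequality (\ref{eq4}) of Theorem \ref{equivalent perimeter geometric and analytic}. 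Conversely, for (iii) $\Rightarrow$ (ii): given $\epsilon>0$ and $\overline{O}$ as above, Theorem \ref{t2} produces some $O'\in\mathsf{O}^\infty(\overline{O})$ with $2P_\alpha(O',K)\le\mathrm{cap}(\overline{O};\dot{\Lambda}^{1,1}_{\alpha,K})+\epsilon$; the inclusion $\overline{O}\subseteq\overline{O'}$ and monotonicity of $\mu$, combined with (iii) applied to $\overline{O'}$ and formula (\ref{smooth-perimater}), give $\mu(\overline{O})^{\beta/n}\le 2\kappa_{n,\alpha,\beta}P_\alpha(O',K)\le\kappa_{n,\alpha,\beta}\bigl(\mathrm{cap}(\overline{O};\dot{\Lambda}^{1,1}_{\alpha,K})+\epsilon\bigr)$, and letting $\epsilon\to 0^+$ completes the step.

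For (ii) $\Rightarrow$ (i): observe that (ii) is literally inequality (\ref{eq2}), so Theorem \ref{t3} upgrades it into the layer-cake inequality (\ref{eq1}) with constant $\kappa_{n,\alpha,\beta}$. Composing (\ref{eq1}) with (\ref{eq3}) of Theorem \ref{equivalent perimeter geometric and analytic} collapses the capacity integral to $\|f\|_{\dot{\Lambda}^{1,1}_{\alpha,K}}$, producing (i) with the same constant $\kappa_{n,\alpha,\beta}$.

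The main obstacle is essentially bookkeeping rather than new analysis: all substantive ingredients (the layer-cake argument, the construction of smooth cutoffs, the capacity-perimeter identity of Theorem \ref{t2}, and the co-area formula) are already established in the preceding sections, and the only care required is to verify that the single constant $\kappa_{n,\alpha,\beta}$ propagates correctly through the chain, which it does because each auxiliary equivalence is itself with multiplicative constant $1$ on one side and $\kappa$-neutral on the other, while the factor of $2$ passing between capacity and perimeter cancels between the (ii)$\Rightarrow$(iii) and (iii)$\Rightarrow$(ii) directions.
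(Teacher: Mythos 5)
Your argument is correct, and its first two implications coincide with the paper's: (i)$\Rightarrow$(ii) by testing $\|f\|_{L^{n/\beta}_\mu}\ge\big(\mu(\overline{O})\big)^{\beta/n}$ on admissible $f$ and invoking (\ref{Related capacity-0}), and (ii)$\Rightarrow$(iii) by chaining with (\ref{eq4}). Where you diverge is in closing the cycle: the paper proves (iii)$\Rightarrow$(i) directly, using the monotonicity of $t\mapsto\mu\big(O_t(f)\big)$ to bound $\|f\|_{L^{n/\beta}_\mu}\le\int_0^\infty\big(\mu(O_t(f))\big)^{\beta/n}\,dt$ and then applying (iii) to the level sets together with (\ref{smooth-perimater}) and the co-area formula (\ref{co-area-1}); you instead prove (iii)$\Rightarrow$(ii) via the capacity--perimeter identity of Theorem \ref{t2}, and (ii)$\Rightarrow$(i) by composing Theorem \ref{t3} (the passage from (\ref{eq2}) to (\ref{eq1})) with the strong-type estimate (\ref{eq3}). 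This is a legitimately different and more modular closing: no layer-cake computation is redone (it is already embedded in Theorems \ref{t3} and \ref{equivalent perimeter geometric and analytic}), and the constant $\kappa_{n,\alpha,\beta}$ propagates trivially since (\ref{eq3}) carries constant $1$; note also that your route only needs $\beta\le n$ where (\ref{eq3})'s proof is valid, which is exactly the range of Theorem \ref{t4}. The price is that you prove four implications instead of three and lean on Theorem \ref{t2}, which the paper's proof of this theorem never uses; moreover the near-optimizers $O'\in\mathsf{O}^\infty(\overline{O})$ furnished by Theorem \ref{t2} are a priori only open sets with $C^\infty$ boundary, not necessarily bounded or connected, so to apply (iii) to them you should remark that they may be taken bounded (e.g.\ as superlevel sets of admissible test functions, as in the proof of Theorem \ref{t2}) --- though this is the same level of informality the paper itself accepts when it applies (iii) to the level sets $O_t(f)$.
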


\begin{proof} (i)$\Rightarrow$(ii) Suppose that the anisotropic fractional Sobolev inequality in (i) holds true.  Then, for all $f\in C_0^\infty$ with $f\geq \mathbf{1}_{\overline{O}}, $ one has 
\begin{eqnarray*}
\big(\mu(\overline{O})\big)^\frac{\beta}{n}= \left(\int_{\bbR^n} \mathbf{1}_{\overline{O}} \,d\mu(x) \right)^\frac{\beta}{n}  \leq    \left(\int_{\bbR^n} f(x)^\frac{n}{\beta}\,d\mu(x) \right)^\frac{\beta}{n}  = \|f\|_{L_{\mu}^\frac{n}{\beta}} \leq \kappa_{n,\alpha,\beta}\|f\|_{\dot{\Lambda}_{\alpha, K}^{1,1}}. 
\end{eqnarray*} 
Taking the infimum over $f\in C_0^\infty$ with $f\geq \mathbf{1}_{\overline{O}}$  and by formula  (\ref{Related capacity-0}),  one gets the desired anisotropic fractional isocapacitary inequality  
\begin{eqnarray*}
\big(\mu(\overline{O})\big)^\frac{\beta}{n}\leq  \kappa_{n,\alpha,\beta}\mathrm{cap}(\overline{O},\dot{\Lambda}_{\alpha, K}^{1,1}). 
\end{eqnarray*}

\noindent (ii)$\Rightarrow$(iii) Assume that the anisotropic fractional isocapacitary inequality holds. Then, for any bounded domain $O\subset\mathbb R^n$ with $C^\infty$ boundary $\partial O$,   one gets the desired  anisotropic  fractional isoperimetric inequality: 
\begin{eqnarray*}   
\big(\mu(\overline{O})\big)^\frac{\beta}{n}\le \kappa_{n,\alpha,\beta}\mathrm{cap}(\overline{O},\dot{\Lambda}_{\alpha, K}^{1,1})  \le   2  \kappa_{n,\alpha,\beta} P_\alpha(\overline{O}, K) 
\end{eqnarray*} 
where the last inequality follows from inequality  (\ref{eq4}).  
 
\smallskip
 
\noindent  (iii)$\Rightarrow$(i)  Assume that the anisotropic  fractional isoperimetric inequality  
holds. Let $f\in C_0^\infty$ and $O_t(f)=\{x\in\bbR^n:\ |f(x)|> t\}$ for all $t\geq 0.$
Obviously, $\mu(O_t(f))$ is a decreasing function on $t\in [0,\infty)$, and hence for $0<\beta\le n$,  
\begin{eqnarray*}   
 \left(\int_0^t \mu \big(O_s(f)\big)\,ds^\frac{n}{\beta}\right)^{\frac{\beta}{n}-1}\mu \big(O_t(f)\big)t^\frac{n}{\beta} \leq  \left(\int_0^t \mu \big(O_t(f)\big)\,ds^\frac{n}{\beta}\right)^{\frac{\beta}{n}-1}\mu \big(O_t(f)\big)t^{\frac{n}{\beta}} 
 = \Big(\mu \big(O_t(f)\big)\Big)^\frac{\beta}{n} t.   
 \end{eqnarray*}  
 Together with equality (\ref{level-equality---1}), one has 
  \begin{eqnarray*} \|f\|_{L_{\mu}^\frac{n}{\beta}} 
&=&\left(\int_0^\infty \mu \big(O_t(f)\big)\,dt^\frac{n}{\beta}\right)^\frac{\beta}{n}\\
&=&\int_0^\infty\frac{d}{dt}\left(\int_0^t \mu \big(O_s(f)\big)\,ds^\frac{n}{\beta}\right)^\frac{\beta}{n}\,dt\\
&=&\int_0^\infty\left(\int_0^t \mu \big(O_s(f)\big)\,ds^\frac{n}{\beta}\right)^{\frac{\beta}{n}-1}\mu \big(O_t(f)\big)t^{\frac{n}{\beta}-1}\,dt\\
&\le&\int_0^\infty \Big(\mu \big(O_t(f)\big)\Big)^\frac{\beta}{n}\,dt. 
\end{eqnarray*}  
Employing the anisotropic  fractional isoperimetric inequality to $O_t(f)$, together with formulas (\ref{smooth-perimater}) and (\ref{co-area-1}), one gets, for all $f\in C^{\infty}_0$,   
\begin{eqnarray*} 
\|f\|_{L_{\mu}^\frac{n}{\beta}} 
 \le \int_0^\infty \Big(\mu \big(\overline{O_t(f)}\big)\Big)^\frac{\beta}{n}\,dt \le  2\kappa_{n,\alpha,\beta}\int_0^\infty P_\alpha\big(O_t(f), K\big)\,dt  
 =  \kappa_{n,\alpha,\beta}\|f\|_{\dot{\Lambda}^{1,1}_{\alpha, K}}, 
 \end{eqnarray*} 
 the desired anisotropic fractional Sobolev inequality.  
 \end{proof}
\begin{rem}
\label{r33} Similarly, for a nonnegative Radon measure $\mu$, constants $0<\beta\le n$ and $\kappa_{n,\beta}>0$, the following three inequalities are equivalent, whence extending \cite[Proposition 3.1]{Xiao09} (cf. \cite[Propisition 3.1]{AX}):
\vskip 2mm \noindent (i) For  all $f\in C_0^\infty$,
$$ 
\|f\|_{L_{\mu}^\frac{n}{\beta}}\le \kappa_{n, \beta} \int_{\bbR^n} \|\nabla f(x)\|_{Z_1^*K} \,dx;
$$
 \noindent (ii) For  any  bounded  domain $O\subset\mathbb R^n$ with $C^\infty$ boundary $\partial O$;
$$ 
  \big(\mu(\overline{O})\big)^\frac{\beta}{n} \le \kappa_{n, \beta} \mathrm{cap}(\overline{O},\dot{W}_{K}^{1,1});
$$
\noindent (iii)   For any  bounded  domain  $O\subset\mathbb R^n$ with $C^\infty$  boundary  $\partial O$,
$$
  \big(\mu(\overline{O})\big)^\frac{\beta}{n} \le 2\kappa_{n, \beta} P(\overline{O}, Z_1K).
$$
   \end{rem}

\end{document}